\documentclass[a4paper,11pt]{article}
\usepackage{amsmath,amsthm,amssymb,enumitem,xcolor}
\usepackage{tikz}

\usepackage[hang]{footmisc}
\setlength\footnotemargin{0mm}

\usepackage[nosort,nocompress,noadjust]{cite}

\usepackage[bookmarks=false,hyperfootnotes=false,colorlinks,
    linkcolor={red!60!black},
    citecolor={blue!50!black},
    urlcolor={blue!80!black}]{hyperref}

\renewcommand{\eqref}[1]{\hyperref[#1]{(\ref{#1})}}

\pagestyle{plain}

\setlength{\evensidemargin}{0pt}
\setlength{\oddsidemargin}{0pt}
\setlength{\topmargin}{-20pt}
\setlength{\footskip}{40pt}
\setlength{\textheight}{690pt}
\setlength{\textwidth}{450pt}
\setlength{\headsep}{10pt}
\setlength{\parindent}{0pt}
\setlength{\parskip}{1ex plus 0.5ex minus 0.2ex}

\newlist{enumlist}{enumerate}{1}
\setlist[enumlist]{labelindent=0cm,label=\arabic*.,ref=\arabic*,labelwidth=2.5ex,labelsep=0.5ex,leftmargin=3ex,align=left,topsep=0.5ex,itemsep=1ex,parsep=1ex}

\newlist{itemlist}{itemize}{1}
\setlist[itemlist]{labelindent=0cm,label=$\bullet$,labelwidth=2.5ex,labelsep=0.5ex,leftmargin=3ex,align=left,topsep=0.5ex,itemsep=1ex,parsep=1ex}

\numberwithin{equation}{section}

{\theoremstyle{definition}\newtheorem{definition}{Definition}[section]
\newtheorem*{definition*}{Definition}

\newtheorem{remark}[definition]{Remark}

\newtheorem*{example*}{Example}
\newtheorem*{examples*}{Examples}}

\newtheorem{proposition}[definition]{Proposition}
\newtheorem{lemma}[definition]{Lemma}
\newtheorem{theorem}[definition]{Theorem}

{\theoremstyle{definition}}



\newcommand{\Gh}{\widehat{G}}
\newcommand{\recht}{\rightarrow}
\newcommand{\cU}{\mathcal{U}}
\newcommand{\muh}{\widehat{\mu}}
\newcommand{\Z}{\mathbb{Z}}
\newcommand{\C}{\mathbb{C}}
\newcommand{\R}{\mathbb{R}}
\newcommand{\Prob}{\operatorname{Prob}}

\newcommand{\lambdah}{\widehat{\lambda}}
\newcommand{\om}{\omega}

\newcommand{\vN}{\operatorname{vN}}
\newcommand{\F}{\mathbb{F}}
\newcommand{\FAW}{\mathord{\text{\rm FAW}}}
\newcommand{\cK}{\mathcal{K}}
\newcommand{\cH}{\mathcal{H}}
\newcommand{\Probc}{\operatorname{Prob}_{\text{\rm c}}}
\newcommand{\ot}{\otimes}
\newcommand{\Ba}{B_{\text{\rm a}}}
\newcommand{\cD}{\mathcal{D}}
\newcommand{\dpr}{^{\prime\prime}}
\newcommand{\cF}{\mathcal{F}}

\newcommand{\vphi}{\varphi}

\newcommand{\Aut}{\operatorname{Aut}}
\newcommand{\Out}{\operatorname{Out}}
\newcommand{\Inn}{\operatorname{Inn}}
\newcommand{\N}{\mathbb{N}}
\newcommand{\al}{\alpha}
\newcommand{\be}{\beta}
\newcommand{\Hom}{\operatorname{Hom}}
\newcommand{\Ad}{\operatorname{Ad}}
\newcommand{\mutil}{\widetilde{\mu}}
\newcommand{\muop}{\mu^{\text{\rm op}}}
\newcommand{\id}{\mathord{\text{\rm id}}}
\newcommand{\Autpmp}{\operatorname{Aut}_{\text{\rm pmp}}}
\newcommand{\cZ}{\mathcal{Z}}
\newcommand{\ovt}{\mathbin{\overline{\otimes}}}
\newcommand{\Sd}{\operatorname{Sd}}
\newcommand{\actson}{\curvearrowright}

\begin{document}

\begin{center}
{\boldmath\Large\bf Non-classification of free Araki-Woods factors and $\tau$-invariants}

\bigskip

{\sc by Rom\'{a}n Sasyk\footnote{\noindent Universidad de Buenos Aires, Departamento de Matem\'{a}tica and Instituto Argentino de Matem\'{a}ticas-CONICET, Buenos Aires (Argentina). E-mail: rsasyk@dm.uba.ar. Supported in part by research grant PICT 2012-1292 (ANPCyT).}, Asger T\"{o}rnquist\footnote{\noindent University of Copenhagen, Department of Mathematics, Copenhagen (Denmark). E-mail: asgert@math.ku.dk. Supported in part by the DNRF Niels Bohr Professorship of Lars Hesselholt, and by DFFResearch Project 7014-00145B.} and Stefaan Vaes\footnote{\noindent KU~Leuven, Department of Mathematics, Leuven (Belgium). E-mail: stefaan.vaes@kuleuven.be. Supported by European Research Council Consolidator Grant 614195 RIGIDITY, and by long term structural funding~-- Methusalem grant of the Flemish Government.}}
\end{center}

\begin{abstract}\noindent
We define the standard Borel space of free Araki-Woods factors and prove that their isomorphism relation is not classifiable by countable structures. We also prove that equality of $\tau$-topologies, arising as invariants of type III factors, as well as cocycle and outer conjugacy of actions of abelian groups on free product factors are not classifiable by countable structures.
\end{abstract}

\section{Introduction}

In his celebrated work, Connes \cite{Co75} proved that all \emph{amenable} factors acting on a separable Hilbert space are hyperfinite and deduced that there is a unique amenable factor for each of the types I$_n$ with $n \in \N$, II$_1$, II$_\infty$ and III$_\lambda$ with $\lambda \in (0,1)$. The uniqueness of the hyperfinite III$_1$ factor was proved in Haagerup's fundamental paper \cite{Ha85}. In \cite{Co75,Co74b}, Connes also proved that hyperfinite type III$_0$ factors are isomorphic to Krieger factors. So, using \cite{Kr74,CT76}, they are completely classified by their \emph{flow of weights},
which is a properly ergodic action of $\R$ on a standard measure space. This last result can equally well be interpreted as a non-classification theorem. Indeed, properly ergodic flows up to conjugacy cannot be classified in any concrete way, and descriptive set theory provides a framework to give a precise meaning to the complexity of such a classification problem. In particular, using Hjorth's concept of \emph{turbulence} \cite{Hj97}, it follows from \cite{FW03} that properly ergodic flows and therefore amenable type III$_0$ factors cannot be classified by countable structures like countable groups or other combinatorial invariants. Even for the subclass of infinite tensor products of matrix algebras, such a non-classification result holds, see \cite{ST09}.

For \emph{nonamenable} factors, complete classification theorems for large families of such factors were obtained in Popa's deformation/rigidity theory \cite{Po06}. In particular, his work allows to ``embed'' unclassifiable structures (like, for instance, probability measure preserving transformations) into the theory of II$_1$ factors and prove that II$_1$ factors are not classifiable by countable structures, see \cite{ST08}.

Among the most natural and most studied nonamenable factors are the \emph{free Araki-Woods factors} of Shlyakhtenko \cite{Sh96}, associated with an orthogonal representation $(U_t)_{t\in\R}$ of $\R$ on a real Hilbert space $\cH_\R$ of dimension at least $2$. This construction generalizes Voiculescu's free Gaussian functor. When $U_t$ is the trivial representation, the associated free Araki-Woods factor is of type II$_1$ and isomorphic to the free group factor $L(\F_n)$ with $n = \dim_\R(\cH_\R)$. When $U_t$ is non trivial, one obtains a full factor of type III that may be viewed as a free probability analog of the classical Araki-Woods factors.

The \emph{almost periodic} free Araki-Woods factors, associated with almost periodic representations $U_t$, were completely classified by Shlyakhtenko in \cite{Sh96} in terms of Connes' $\Sd$-invariant, which is a countable subgroup of $\R_*^+$. While the complete classification of arbitrary free Araki-Woods factors remains wide open, a first result in that direction, classifying a large family of \emph{non almost periodic} free Araki-Woods factors, was obtained in \cite{HSV16}. As the first main result of our paper, we turn the space of free Araki-Woods factors into a standard Borel space and reinterpret one of the examples in \cite{HSV16} as providing a Borel family $M_\mu$ of free Araki-Woods factors indexed by arbitrary non-atomic probability measures on the Cantor set $C$ such that $M_\mu \cong M_\eta$ if and only if the measures $\mu$ and $\eta$ are mutually absolutely continuous; see Theorem \ref{thm.non-classif-free-araki-woods}. In combination with \cite{KS99}, it follows that free Araki-Woods factors are not classifiable by countable structures.

The finest modular theory invariant for full type III factors is \emph{Connes $\tau$-invariant} \cite{Co74a}, defined as the weakest topology on $\R$ that makes the canonical modular homomorphism $\R \recht \Aut(M)/\Inn(M)$ continuous. In \cite{Sh02}, Shlyakhtenko proved that there is a continuum of possible $\tau$-topologies and used this to construct a continuum of non almost periodic free Araki-Woods factors. As our second main result, we prove in Theorem \ref{thm.non-classifiable-tau} that $\tau$-topologies are not even classifiable by countable structures. This provides another proof for the non-classification of free Araki-Woods factors, but also applies to other classification questions where a $\tau$-invariant makes sense. As an example, we prove in Theorem \ref{thm.non-class-actions} that \emph{cocycle or outer conjugacy} of actions of $\Z$, $\R$, or any other non compact, abelian, locally compact group $G$ on a free group factor $L(\F_n)$, or any other free product factor $M_1 * M_2$, are not classifiable by countable structures. Finally note that a similar non-classification theorem for \emph{conjugacy} of a single automorphism (i.e.\ an action of $\Z$) was obtained in \cite{KLP08} for the hyperfinite II$_1$ factor $R$ and in \cite{KLP14} for free product factors, using a spectral invariant. Such spectral invariants are however not preserved under cocycle conjugacy. Moreover, by \cite{Oc85}, all outer actions of a discrete amenable group $G$ on $R$ are cocycle conjugate, and this leads to a complete classification of actions of $G$ on $R$ up to cocycle conjugacy.

{\bf Acknowledgment.} Research for this paper was mainly carried out when R.\ Sasyk was at the Hausdorff Research Institute for Mathematics in Bonn during the Trimester program on ``von Neumann Algebras''. He wishes to thank the Hausdorff Institute for kind hospitality and support. S.\ Vaes wants to thank D.\ Shlyakhtenko for discussions and suggestions that helped to improve this article.



\section{Non-classification of free Araki-Woods factors}\label{non-classification}

Whenever $\cK$ is a separable Hilbert space, we denote by $\vN(\cK)$ the set of all von Neumann subalgebras of $B(\cK)$ and we equip $\vN(\cK)$ with the Effros Borel structure defined in \cite{Ef64} as the smallest $\sigma$-algebra such that for every $\vphi \in B(\cK)_*$, the map $\vN(\cK) \recht \R : M \mapsto \|\vphi|_M\|$ is measurable. We start by defining a Borel subset $\FAW \subset \vN(\cK)$ that contains, up to isomorphism, all free Araki-Woods factors. The correct interpretation of \cite[Example 5.4]{HSV16} then provides a Borel map $\Theta : \Probc(C) \recht \FAW$ from the space of continuous probability measures on the Cantor set $C$ to the space of free Araki-Woods factors such that $\Theta(\mu) \cong \Theta(\mu')$ if and only $\mu \approx \mu'$, where $\approx$ denotes the equivalence relation of mutual absolute continuity on $\Prob(K)$. So, $\Theta$ is a Borel reduction from $(\Probc(K),\approx)$ to $(\FAW,\cong)$ and it follows from \cite[Theorem 2.1]{KS99} that free Araki-Woods factors are not classifiable by countable structures.

To define the Borel set $\FAW$, fix a separable, infinite dimensional Hilbert space $\cH$. Define the full Fock space $\cK = \cF(\cH)$ given by
\begin{equation}\label{eq.full-fock}
\cF(\cH) = \C \Omega \oplus \bigoplus_{n = 1}^\infty \underbrace{\bigl(\cH \ot \cdots \ot \cH\bigr)}_{\text{$n$ times}} \; .
\end{equation}
We denote by $\Ba(\cH)$ the space of antilinear bounded operators from $\cH$ to $\cH$ and we equip both $\Ba(\cH)$ and $B(\cH)$ with the standard Borel structure induced by the strong topology. We define the following \emph{parameter space} for $\FAW \subset \vN(\cK)$.
\begin{equation}\label{eq.def-D}
\begin{split}
\cD = \bigl\{ (J,T) \in \Ba(\cH) \times B(\cH) \bigm| & J = J^* , J^2 = 1, T = T^*, 0 \leq T \leq 1, T + JTJ = 1,\\ &\text{$T$ and $1-T$ have trivial kernel}\; \bigr\} \; .
\end{split}
\end{equation}
Since the operators with trivial kernel form a Borel subset of $B(\cH)$, it is easy to see that $\cD$ is a Borel subset of $\Ba(\cH) \times B(\cH)$. The formula $S = J(T^{-1}-1)^{1/2}$ defines a bijection between the set $\cD$ and the set of all densely defined, closed, antilinear operators $S$ on $\cH$ satisfying $S^2 \subset 1$, with the inverse being determined by the polar decomposition of $S$. To every such involution $S$ is associated the free Araki-Woods factor acting on $\cK$ given by
\begin{equation}\label{eq.map-Phi}
\Phi(J,T) = \bigl\{\ell(\xi) + \ell(S(\xi))^*  \bigm| \xi \in D(S)\bigr\}\dpr = \bigl\{ \ell(T^{1/2}\xi) + \ell(J(1-T)^{1/2} \xi)^* \bigm| \xi \in \cH \bigr\}\; .
\end{equation}
It follows that $\Phi : \cD \recht \vN(\cK)$ is a Borel map. Since the graph of $S$ can be recovered from $M=\Phi(J,T)$ as the closure of the set
$$\bigl\{ (x \Omega, x^* \Omega) \bigm| x \in M, x \Omega \in \cH, x^* \Omega \in \cH \bigr\} \; ,$$
it follows that $\Phi$ is injective.

\begin{definition}
For $\cK = \cF(\cH)$, we define the Borel set $\FAW \subset \vN(\cK)$ as the image of the set $\cD$ defined in \eqref{eq.def-D} under the injective Borel map $\Phi$ defined in \eqref{eq.map-Phi}.
\end{definition}

By construction, every free Araki-Woods factor is isomorphic with an element of $\FAW$. We can now deduce from \cite[Example 5.4]{HSV16} the following result.

\begin{theorem}\label{thm.non-classif-free-araki-woods}
Let $C$ be the Cantor set. There exists a Borel map $\Theta : \Probc(C) \recht \FAW$ such that $\Theta(\mu) \cong \Theta(\mu')$ if and only if $\mu \approx \mu'$, where $\approx$ denotes the equivalence relation of mutual absolute continuity. The map $\Theta$ can be chosen such that all $\Theta(\mu)$ have as $\tau$-invariant the usual topology on $\R$.

In particular, the equivalence relation $(\FAW,\cong)$ is not classifiable by countable structures.
\end{theorem}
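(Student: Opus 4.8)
The plan is to establish Theorem \ref{thm.non-classif-free-araki-woods} by invoking the detailed construction of \cite[Example 5.4]{HSV16} and repackaging it in the Borel framework set up above. First I would recall from \cite{HSV16} the explicit recipe that, given a non-atomic probability measure $\mu$ on the Cantor set $C$, produces an orthogonal representation $(U_t^\mu)_{t\in\R}$ of $\R$ on a real Hilbert space $\cH_\R^\mu$ (of dimension at least $2$, indeed infinite-dimensional), whose spectral measure class is governed by $\mu$. The key isomorphism input is that the associated free Araki-Woods factor $\Gamma(\cH_\R^\mu, U^\mu)''$ depends, up to isomorphism, only on the measure class of $\mu$: by \cite[Example 5.4]{HSV16} one has $\Gamma(\cH_\R^\mu, U^\mu)'' \cong \Gamma(\cH_\R^{\mu'}, U^{\mu'})''$ if and only if $\mu \approx \mu'$. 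One should also record that the representation can be arranged to be \emph{weakly mixing} (no almost periodic part), so that by \cite{Sh02} the $\tau$-invariant of each $\Gamma(\cH_\R^\mu,U^\mu)''$ is the usual topology on $\R$; this is the content of the last sentence of the theorem statement.

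Next I would pass from the abstract free Araki-Woods factor to an element of $\FAW \subset \vN(\cK)$. Recall that a free Araki-Woods factor $\Gamma(\cH_\R,U)''$ comes with a canonical cyclic vector $\Omega$ and its GNS Hilbert space is naturally the full Fock space $\cF(\cH)$ over the complexification $\cH = \cH_\R \otimes_\R \C$, with the modular structure encoded by the antilinear involution $S$ built from $U$; concretely $S = J(T^{-1}-1)^{1/2}$ for the pair $(J,T) \in \cD$ determined by $U$. Since we fixed one separable infinite-dimensional $\cH$ and $\cK = \cF(\cH)$ once and for all, and each $\cH_\R^\mu$ is separable and infinite-dimensional, I can fix measurable identifications $\cH_\R^\mu \otimes_\R \C \cong \cH$ depending on $\mu$ in a Borel way (this is where one must be slightly careful: one needs the whole assignment $\mu \mapsto (\cH_\R^\mu, U^\mu)$, and hence $\mu \mapsto (J_\mu, T_\mu) \in \cD$, to be Borel). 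Composing with the Borel map $\Phi : \cD \recht \vN(\cK)$ of \eqref{eq.map-Phi} then yields the desired Borel map $\Theta : \Probc(C) \recht \FAW$, $\Theta(\mu) = \Phi(J_\mu, T_\mu)$, and by construction $\Theta(\mu)$ is isomorphic as a von Neumann algebra to $\Gamma(\cH_\R^\mu, U^\mu)''$, so the isomorphism-class statement $\Theta(\mu) \cong \Theta(\mu') \iff \mu \approx \mu'$ is immediate from \cite[Example 5.4]{HSV16}.

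For the ``in particular'' clause: the map $\Theta$ is a Borel reduction of the equivalence relation $\approx$ of mutual absolute continuity on $\Probc(C)$ to the isomorphism relation $\cong$ on $\FAW$. By \cite[Theorem 2.1]{KS99}, the relation $\approx$ on the space of (continuous) probability measures on the Cantor set is not classifiable by countable structures — indeed it is a benchmark turbulent equivalence relation in the sense of Hjorth. Since the class of equivalence relations that are classifiable by countable structures is closed downwards under Borel reducibility, the existence of the Borel reduction $\Theta$ forces $(\FAW,\cong)$ to also fail to be classifiable by countable structures. This completes the proof modulo the cited results.

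The main obstacle I anticipate is the Borel measurability of the assignment $\mu \mapsto (J_\mu, T_\mu)$, i.e.\ checking that the construction of \cite[Example 5.4]{HSV16} — which a priori is carried out for a single fixed measure — can be performed uniformly and measurably in $\mu$, including the choice of a Borel identification of the varying complexified real Hilbert spaces with the fixed model space $\cH$. Everything else (the isomorphism criterion, the computation of the $\tau$-invariant, and the descriptive set-theoretic conclusion) is a direct citation. I would therefore spend the bulk of the argument spelling out the explicit formulas for $\cH_\R^\mu$ and $U^\mu$ from \cite{HSV16}, exhibiting them as Borel functions of $\mu$, and then reading off $T_\mu$ and $J_\mu$ from the generator of $(U_t^\mu)$ via the standard functional-calculus formulas, so that the composite $\mu \mapsto \Phi(J_\mu,T_\mu)$ is manifestly Borel.
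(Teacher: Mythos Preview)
Your overall strategy---package \cite[Example 5.4]{HSV16} as a Borel map and then invoke \cite{KS99}---is exactly the paper's approach. But your sketch underestimates what \cite[Example 5.4]{HSV16} actually requires, and this creates real gaps.

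First, \cite[Example 5.4]{HSV16} does not apply to an arbitrary nonatomic measure on an arbitrary Cantor set. The paper chooses $C \subset [0,1]$ to be an \emph{independent} subset of $\R$ (via Rudin \cite[5.1.4, 5.2.2]{Ru62}), and then feeds in not $\mu$ itself but $(\mu+\lambda)/2$, symmetrized and augmented by a fixed pair of atoms $\delta_{\pm\log q}$. All three ingredients---the independence of $C$, the Lebesgue summand, and the atomic part---are needed for the hypotheses of \cite[Example 5.4]{HSV16}, and hence for the isomorphism criterion $\Theta(\mu)\cong\Theta(\mu')\iff\mu\approx\mu'$ to hold. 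Your proposal omits all of this, treating the citation as a black box on $\Probc(C)$.

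Second, your justification of the $\tau$-invariant is off. You claim the representation is weakly mixing with no almost periodic part, but the paper's construction deliberately contains an almost periodic summand (the $\C^2$ piece carrying the atoms at $\pm\log q$). The fact that $\tau(\Theta(\mu))$ is the usual topology comes not from weak mixing but from the Lebesgue summand $\lambda$ in $(\mu+\lambda)/2$, and is established by the cited result in \cite{HSV16}, not by \cite{Sh02}.

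Third, on the Borel measurability issue you correctly flag: the paper resolves it not by ``measurable identifications of varying Hilbert spaces with a fixed model'' but by working on a single fixed $\cH = L^2([0,1],\lambda)^2 \oplus \C^2$ from the start, with a fixed $J$ independent of $\mu$, and pushing all the $\mu$-dependence into $T_\mu$ via the continuous map $\mu \mapsto \vphi_\mu$ of Lemma~\ref{lem.map-vphi} (the increasing rearrangement / quantile function). This makes $\mu \mapsto (J,T_\mu)$ manifestly Borel with no selection arguments needed.
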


\begin{proof}
Denote by $\lambda$ the Lebesgue measure on the interval $[0,1]$. Fix the separable Hilbert space
$$\cH = L^2([0,1],\lambda) \oplus L^2([0,1],\lambda) \oplus \C^2 \; .$$
Denote by $e_0,e_1 \in \C^2$ the two standard basis vectors. Define the anti-unitary involutions $J_0 : \C^2 \recht \C^2$ and $J : \cH \recht \cH$ given by
$$J_0(e_1) = e_2 \;\; , \;\; J_0(e_2) = e_1 \quad\text{and}\quad J(\xi_1 \oplus \xi_2 \oplus \xi_3) = \overline{\xi_2} \oplus \overline{\xi_1} \oplus J_0(\xi_3) \; .$$
Fix any $q \in (0,1)$. Using the functions $\vphi_\mu : [0,1] \recht [0,1]$ introduced in Lemma \ref{lem.map-vphi} below, we define for every probability measure $\mu \in \Prob([0,1])$, the operator $T_\mu \in L^\infty([0,1]) \oplus L^\infty([0,1]) \oplus M_2(\C) \subset B(\cH)$, given by
$$T_\mu = \bigl(1+\exp(\vphi_\mu)\bigr)^{-1} \oplus \bigl(1+\exp(-\vphi_\mu)\bigr)^{-1} \oplus \begin{pmatrix} (1+q)^{-1} & 0 \\ 0 & (1+q^{-1})^{-1}\end{pmatrix} \; .$$
By construction, $(J,T_\mu) \in \cD$ and by Lemma \ref{lem.map-vphi}, the map $\mu \mapsto (J,T_\mu)$ is Borel. We define the Borel map
$$\Theta_1 : \Prob([0,1]) \recht \FAW : \Theta_1(\mu) = \Phi(J,T_\mu) \; .$$
Denoting by $\delta_a$ the Dirac measure in $a \in \R$, we associate to any nonatomic probability measure $\mu$ on $[0,1]$ the following probability measures on $\R$~: the opposite measure $\muop$ given by $\muop(\cU) = \mu(-\cU)$ and the symmetric probability measure $\mutil$ given by
$$\mutil = \frac{1}{4}\bigl( \mu + \muop + \delta_{\log q} + \delta_{-\log q}\bigr) \; .$$
Using the unitary $V : L^2([0,1],\mu) \recht L^2([0,1],\lambda)$ given by Lemma \ref{lem.map-vphi}, it follows that the free Araki-Woods factor $\Phi(J,T_\mu)$ is isomorphic with the free Araki-Woods factor denoted by $\Gamma(\mutil,1)$ in \cite{HSV16}.

By \cite[Theorems 5.1.4 and 5.2.2]{Ru62} (see also \cite[19.1 and 19.2]{Ke95}), we can choose a copy of the Cantor set $C \subset [0,1]$ such that $C$ is independent as a subset of $\R$. Viewing $\Probc(C)$ as a subset of $\Probc([0,1])$, we define
$$\Theta : \Probc(C) \recht \FAW : \Theta(\mu) = \Theta_1((\mu + \lambda)/2) \; .$$
By \cite[Example 5.4]{HSV16}, we get that for all $\mu,\mu' \in \Probc(C)$, the von Neumann algebras $\Theta(\mu)$ and $\Theta(\mu')$ are isomorphic if and only if the measures $\mu$ and $\mu'$ are mutually absolutely continuous, and that the $\tau$-invariant of every $\Theta(\mu)$ equals the usual topology on $\R$.

By \cite[Theorem 2.1]{KS99}, the equivalence relation $\approx$ of mutual absolute continuity on $\Probc(C)$ is not classifiable by countable structures. A fortiori, the equivalence relation $\cong$ on $\FAW$ is not classifiable by countable structures.
\end{proof}

The following lemma is probably well known. It is for instance essentially contained in \cite[Proof of Theorem 17.41]{Ke95}. For completeness, we provide a short proof.

\begin{lemma}\label{lem.map-vphi}
For every probability measure $\mu \in \Prob([0,1])$, define the increasing function
$$\vphi_\mu : [0,1] \recht [0,1] : \vphi_\mu(y) = \min \{x \in [0,1] \mid \mu([0,x]) \geq y\} \; .$$
Equip $\Prob([0,1])$ with the weak topology and denote by $\lambda$ the Lebesgue measure on $[0,1]$. Then, the following holds.
\begin{enumlist}
\item Each $\vphi_\mu$ is a Borel function.
\item For every $\mu \in \Prob([0,1])$, we have that $(\vphi_\mu)_*(\lambda) = \mu$.
\item The map $\Prob([0,1]) \recht L^1([0,1],\lambda) : \mu \mapsto \vphi_\mu$ is continuous.
\item If $\mu$ has no atoms, the map $\vphi_\mu$ is strictly increasing and
$$V : L^2([0,1],\mu) \recht L^2([0,1],\lambda) : (V \xi)(y) = \xi(\vphi_\mu(y))$$
is unitary.
\end{enumlist}
\end{lemma}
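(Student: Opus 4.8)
The plan is to work throughout with the cumulative distribution function $F_\mu(x) = \mu([0,x])$, which is non-decreasing and right-continuous (by continuity from above of $\mu$) with $F_\mu(1) = 1$, and to derive everything from the elementary equivalence
\[
\vphi_\mu(y) \le x \quad\Longleftrightarrow\quad y \le F_\mu(x) \qquad (x,y \in [0,1]).
\]
To prove it, note that $S_y := \{x \in [0,1] : F_\mu(x) \ge y\}$ is upward closed and contains $1$, hence equals $[a,1]$ or $(a,1]$ for $a = \inf S_y$; right-continuity of $F_\mu$ forces $a \in S_y$, so $S_y = [\vphi_\mu(y),1]$, which is the asserted equivalence (the case $y=0$ being trivial as $\vphi_\mu(0)=0$). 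Item~(1) follows at once: $\vphi_\mu$ is non-decreasing, hence Borel. For item~(2), the equivalence gives $\{y : \vphi_\mu(y) \le x\} = [0,F_\mu(x)]$ up to the single point $y=0$, so $(\vphi_\mu)_*(\lambda)([0,x]) = F_\mu(x) = \mu([0,x])$ for all $x$; as the intervals $[0,x]$ form a $\pi$-system generating the Borel $\sigma$-algebra of $[0,1]$, this yields $(\vphi_\mu)_*(\lambda) = \mu$.

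For item~(4), assume $\mu$ non-atomic, so $F_\mu$ is continuous with $F_\mu(0) = 0$. If $\vphi_\mu(y_1) = \vphi_\mu(y_2) = a$ with $y_1 < y_2$, the equivalence gives $F_\mu(a) \ge y_2$, while (using $F_\mu(x) < y_1$ for $x < a$, again the equivalence, together with continuity of $F_\mu$ when $a>0$, or directly $F_\mu(0)=0$ when $a=0$) we get $F_\mu(a) \le y_1$ --- a contradiction; so $\vphi_\mu$ is strictly increasing. Being an injective Borel map of standard Borel spaces, $\vphi_\mu$ has Borel range $A$ and is a Borel isomorphism $[0,1] \to A$, with $\mu(A) = \lambda(\vphi_\mu^{-1}(A)) = 1$ by item~(2). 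Hence $\vphi_\mu$ is an isomorphism of the measure spaces $([0,1],\lambda)$ and $([0,1],\mu)$, and $V$ is the unitary it induces: it is isometric by the change of variables in item~(2), and surjective because any $\eta \in L^2([0,1],\lambda)$ has preimage $\eta \circ (\vphi_\mu|_A)^{-1}$ on $A$, extended by $0$ off $A$.

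The real content is item~(3), which I expect to be the only delicate step. As $[0,1]$ is compact metric, the weak topology on $\Prob([0,1])$ is metrizable, so it suffices to treat sequences: let $\mu_n \to \mu$ weakly. By the portmanteau theorem, $F_{\mu_n}(x) \to F_\mu(x)$ at every continuity point $x$ of $F_\mu$, hence at all but countably many $x$. I claim this forces $\vphi_{\mu_n}(y) \to \vphi_\mu(y)$ at every continuity point $y \in (0,1)$ of $\vphi_\mu$, hence at all but countably many $y$. Indeed, writing $a = \vphi_\mu(y)$ and fixing $\varepsilon > 0$: for the lower bound, if $a > 0$ pick a continuity point $x_1 \in (a-\varepsilon,a)$ of $F_\mu$, so $F_\mu(x_1) < y$ and eventually $F_{\mu_n}(x_1) < y$, i.e. $\vphi_{\mu_n}(y) > x_1 > a - \varepsilon$; for the upper bound, if $a < 1$ note that continuity of $\vphi_\mu$ at $y$ prevents $F_\mu$ from being identically $y$ on an interval $[a, a+\varepsilon)$ (such flatness would give $\vphi_\mu(y') \ge a + \varepsilon$ for all $y' > y$, a jump of $\vphi_\mu$ at $y$), so there is a continuity point $x_2 \in (a, a+\varepsilon)$ of $F_\mu$ with $F_\mu(x_2) > y$, whence eventually $F_{\mu_n}(x_2) \ge y$, i.e. $\vphi_{\mu_n}(y) \le x_2 < a + \varepsilon$ (the boundary cases $a \in \{0,1\}$ being trivial). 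Since $0 \le \vphi_{\mu_n} \le 1$, dominated convergence then gives $\|\vphi_{\mu_n} - \vphi_\mu\|_{L^1([0,1],\lambda)} \to 0$. (Alternatively, item~(3) is the classical fact that $\mu \mapsto \vphi_\mu$ is an isometry of $\Prob([0,1])$, equipped with the $1$-Wasserstein metric, into $L^1([0,1],\lambda)$, combined with the fact that this metric induces the weak topology on the compact space $\Prob([0,1])$.)
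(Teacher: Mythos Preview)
Your proof is correct. Items (1), (2), and (4) proceed essentially as in the paper: both arguments rest on the equivalence $\vphi_\mu(y)\le x \Leftrightarrow y\le F_\mu(x)$ (which the paper states as $\vphi_\mu^{-1}([0,a])=[0,\mu([0,a])]$), and both handle (4) by observing that $\vphi_\mu$ is a Borel isomorphism onto its range, which has full $\mu$-measure.

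For item (3) you take a different but equally valid route. The paper establishes the closed-form identity
\[
\|\vphi_\mu-\vphi_\eta\|_1 \;=\; \int_0^1 \bigl|F_\mu(a)-F_\eta(a)\bigr|\,da
\]
(a layer-cake/Fubini computation) and then applies the portmanteau theorem plus dominated convergence on the CDF side. You instead work directly on the quantile side, showing that weak convergence $\mu_n\to\mu$ forces $\vphi_{\mu_n}(y)\to\vphi_\mu(y)$ at every continuity point of $\vphi_\mu$, and then invoke dominated convergence. Both arguments ultimately reduce to a.e.\ convergence plus the bound $0\le\vphi_\mu\le 1$; the paper's identity is slicker (it gives an exact isometry, which is precisely the $1$-Wasserstein remark you add at the end), while your argument is more self-contained in that it avoids proving the identity. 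Your treatment of the upper bound---ruling out a flat stretch $F_\mu\equiv y$ on $[a,a+\varepsilon)$ by showing it would force a jump of $\vphi_\mu$ at $y$---is the one nontrivial step and is handled correctly.
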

\begin{proof}
1.\ Since $\vphi_\mu$ is increasing, it is a Borel function.

2.\ For every $a \in [0,1]$, we have that $\vphi_\mu^{-1}([0,a]) = [0,\mu([0,a])]$. Therefore, $(\vphi_\mu)_*(\lambda)$ and $\mu$ coincide on $[0,a]$ for all $a \in [0,1]$. This implies that $(\vphi_\mu)_*(\lambda) = \mu$.

3.\ A direct computation gives that for all $\mu,\eta \in \Prob([0,1])$, we have
$$\|\vphi_\mu - \vphi_\eta\|_1 = \int_0^1 \bigl|\mu([0,a]) - \eta([0,a])\bigr| \, da \; .$$
It follows that $\Prob([0,1]) \recht L^1([0,1]) : \mu \mapsto \vphi_\mu$ is continuous.

4.\ When $\mu$ has no atoms, the function $x \mapsto \mu([0,x])$ is continuous and thus attains all values in $[0,1]$. It follows that $\vphi_\mu$ is strictly increasing. Defining $R_\mu = \vphi_\mu([0,1])$, we have that $\mu([0,1] \setminus R_\mu) = 0$ and $\vphi_\mu : [0,1] \recht R_\mu$ is a bijection sending $\lambda$ to $\mu$. It follows that $V$ is a well defined unitary operator.
\end{proof}

%

\section{\boldmath Non-classification of $\tau$-topologies}\label{sec.non-classification-tau-topologies}

Recall that a factor $M$ is called \emph{full} when $\Inn(M) \subset \Aut(M)$ is closed. Then, $\Out(M)$ has a natural Polish group structure. In \cite[Definition 5.1]{Co74a}, Connes defined the invariant $\tau(M)$ as the weakest topology on $\R$ that makes the (canonical) modular homomorphism $\R \recht \Out(M) : t \mapsto \sigma_t^\vphi$ continuous. Given a unitary representation $\pi : \R \recht \cU(\cH)$ on a separable Hilbert space, in \cite[Theorem 5.2]{Co74a}, a full factor $M$ is constructed such that $\tau(M)$ equals the weakest topology on $\R$ that makes $\pi$ continuous.

Let $G$ be a second countable, locally compact, abelian group. Since unitary representations of $G$ on a separable Hilbert space are classified by their spectral measure and multiplicity function, it is natural to define as follows the $\tau$-topology $\tau(\mu)$ for any Borel probability measure $\mu$ on the Pontryagin dual $\Gh$. Consider the unitary representation
\begin{equation}\label{eq.rep-pi-mu}
\pi_\mu : G \recht \cU(L^2(\Gh,\mu)) : (\pi_\mu(g) \xi)(\om) = \om(g) \, \xi(\om) \quad\text{for all}\;\; g \in G, \xi \in L^2(\Gh,\mu), \om \in \Gh
\end{equation}
and denote by $\tau(\mu)$ the weakest topology on $G$ that makes the map $g \mapsto \pi_\mu(g)$ continuous from $G$ to the unitary group equipped with the strong topology. We denote by $\muh$ the Fourier transform of $\mu$ defined as
$$\muh : G \recht \C : \muh(g) = \int_{\Gh} \om(g) \, d\mu(\om) \; .$$
Note that $g_n \recht e$ in the topology $\tau(\mu)$ if and only if $\muh(g_n) \recht 1$.

Given a Polish space $K$, we always equip $\Prob(K)$ with the weak topology, i.e.\ the weakest topology making the maps $\mu \mapsto \int_K F \, d\mu$ continuous for all bounded continuous functions $F \in C_b(K)$.

Finally, denote by $S_\infty$ the Polish group of all permutations of $\N$. Recall that an equivalence relation $E$ on a Polish space $X$ is said to be generically $S_\infty$-ergodic if for any continuous action of $S_\infty$ on a Polish space $Y$, any Baire measurable 
morphism $f : X \recht Y$ from $E$ to the orbit equivalence relation of $S_\infty \actson Y$
must map a comeager subset of $X$ to a single $S_\infty$-orbit. If moreover $E$ has meager orbits, it follows that $E$ is not classifiable by countable structures.


In \cite[Theorem 2.3]{Sh02}, it was proven that there is a continuum of different $\tau$-topologies on $\R$. We now prove that equality of $\tau$-topologies is not even classifiable by countable structures.

\begin{theorem}\label{thm.non-classifiable-tau}
Let $G$ be a second countable, locally compact, noncompact, abelian group. Let $K \subset \Gh$ be a nonempty closed subset for which there exists a probability measure $\mu_1 \in \Prob(\Gh)$ with support $K$ such that $\widehat{\mu_1}$ tends to zero at infinity.

Then, the equivalence relation on $\Prob(K)$ defined by $\mu \sim \mu'$ iff $\tau(\mu) = \tau(\mu')$ has meager equivalence classes and is generically $S_\infty$-ergodic. In particular, $\sim$ is not classifiable by countable structures.
\end{theorem}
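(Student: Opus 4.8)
The plan is to build, from the single measure $\mu_1$ with $\widehat{\mu_1}$ vanishing at infinity, a Borel reduction from a known generically $S_\infty$-ergodic equivalence relation with meager classes into $(\Prob(K),\sim)$. The natural candidate source is the equivalence relation $E_0$-type or, more precisely, an equivalence relation of the form ``equality of closed subgroups'' or ``equality of countable subsets'', which is classical in the descriptive set theory literature to be generically $S_\infty$-ergodic (it is essentially the relation $=^+$ on $(2^\N)^\N$, or the isomorphism of countable structures). The key observation is that $\tau(\mu)$ depends on $\mu$ only through the behaviour of $\widehat{\mu}$ near infinity: $g_n \to e$ in $\tau(\mu)$ iff $\widehat{\mu}(g_n) \to 1$. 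So two measures are $\sim$-equivalent precisely when they induce the same ``convergence to $e$ at infinity'' structure on $G$, and this is a tail-type condition.

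First I would fix a sequence $(g^{(n)})_n$ in $G$ going to infinity (possible since $G$ is noncompact) along which $\widehat{\mu_1}(g^{(n)}) \to 0$, and more generally exploit that for any finite signed combination one can control the Fourier transform. Then, to a subset $A \subseteq \N$ I would like to associate a measure $\mu_A$ built by ``spreading $\mu_1$ along a sequence of group translates indexed by $A$'' so that $\tau(\mu_A)$ records exactly which of the reference sequences converge to $e$; the crucial point is that $\tau(\mu_A) = \tau(\mu_B)$ should be equivalent to $A$ and $B$ agreeing modulo the relevant ideal (finite sets, or something with comeagerly many representatives). Concretely, pick characters $\omega_n \in \Gh$ (or elements of $K$ — here the hypothesis that $\mathrm{supp}(\mu_1)=K$ and $K$ closed gets used to keep everything supported in $K$) and translates, set $\mu_A = \sum_{n \in A} 2^{-n} (\text{translate of } \mu_1 \text{ by } \omega_n) + (\text{fixed background piece with support } K)$ suitably normalized; ensure via the vanishing-at-infinity of $\widehat{\mu_1}$ that the cross terms decay, so that $\widehat{\mu_A}(g) \to 1$ along a test sequence iff $A$ contains the index attached to that sequence. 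Borel-ness of $A \mapsto \mu_A$ is routine from the series formula and continuity of translation.

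The main obstacle is the combinatorial/analytic core: arranging the translates and the test sequences so that the map is genuinely a \emph{reduction} — i.e.\ that $\tau(\mu_A) = \tau(\mu_B)$ forces $A = B$ (or $A \mathbin{\triangle} B$ lies in the ideal), not merely that equal $A$'s give equal topologies. This requires a separation argument: for each $n$ one needs a sequence in $G$ tending to infinity along which $\widehat{(\text{translate}_n \mu_1)} \to 1$ while $\widehat{(\text{translate}_m \mu_1)} \not\to 1$ for $m \ne n$; producing such ``orthogonal'' test sequences uses noncompactness of $G$ (to have room) together with the decay of $\widehat{\mu_1}$ (to kill interference). Once the reduction is in place, meagerness of $\sim$-classes follows because each class is contained in a countable union of sets of the form ``$\widehat{\mu}$ behaves a prescribed way along a fixed sequence'', each of which is meager (a small perturbation of $\mu$ destroys any prescribed limiting value, using that $\mu \mapsto \widehat{\mu}(g)$ is continuous and non-locally-constant); and generic $S_\infty$-ergodicity is inherited from the source relation through the Borel reduction. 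I would then conclude via the stated fact that a generically $S_\infty$-ergodic relation with meager classes is not classifiable by countable structures.
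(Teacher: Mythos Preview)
Your proposal has two genuine gaps.

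\textbf{First, generic $S_\infty$-ergodicity does not transfer along Borel reductions.} If $f:(X,E)\to(Y,F)$ is a Borel reduction and $E$ is generically $S_\infty$-ergodic, a Baire-measurable homomorphism $h:Y\to Z$ composed with $f$ collapses a comeager subset of $X$, not of $Y$; the image $f(X)$ may well be meager in $Y=\Prob(K)$. What \emph{does} transfer upward is generic $S_\infty$-ergodicity along an inclusion of equivalence relations on the \emph{same} space. The paper exploits exactly this: mutually absolutely continuous measures give unitarily equivalent $\pi_\mu$, hence equal $\tau(\mu)$, so $\mathord{\approx}\subset\mathord{\sim}$ on $\Prob(K)$; since $\widehat{\mu_1}\to 0$ at infinity forces $\mu_1$ nonatomic and thus $K$ perfect, the Kechris--Sofronidis result gives that $\approx$ is generically $S_\infty$-ergodic, and a fortiori so is $\sim$. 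Your reduction-from-outside scheme cannot reach this conclusion without an extra argument that the image of your map $A\mapsto\mu_A$ is topologically large in $\Prob(K)$, which you do not provide (and which looks unlikely for a countable-parameter family).

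\textbf{Second, your building blocks need not live in $\Prob(K)$.} The set $K$ is an arbitrary closed subset of $\Gh$, not a subgroup, so a translate of $\mu_1$ by $\omega_n$ has support $\omega_n K$, which has no reason to sit inside $K$. You flag this but do not resolve it; there is no general mechanism to ``keep everything supported in $K$'' while translating.

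For the meagerness of $\sim$-classes the paper also argues quite differently from your perturbation sketch. It fixes an atomic $\mu_0$ with dense support in $K$ (so $\pi_{\mu_0}(G)$ has compact closure) and proves a Baire-category claim: for any sequence $g_n\to\infty$ with $\widehat{\mu_0}(g_n)\to 1$ and any $a\in[0,1]$, the set of $\mu\in\Prob(K)$ admitting a subsequence with $\widehat{\mu}(g_{n_k})\to a$ is comeager, because convex combinations $a\eta_0+(1-a)\eta_1$ with $\eta_i\ll\mu_i$ are dense and satisfy $\widehat{\eta_0}(g_n)\to 1$, $\widehat{\eta_1}(g_n)\to 0$ (the latter since $\pi_{\mu_1}$ is mixing). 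If some class $[\mu]_\sim$ were nonmeager it would meet both the $a=1$ and (after passing to a subsequence) the $a=0$ comeager sets, producing $\eta_0,\eta_1\in[\mu]_\sim$ and a sequence tending to $e$ in $\tau(\eta_0)$ but not in $\tau(\eta_1)$---a contradiction. This is a clean two-measure (atomic vs.\ mixing) dichotomy, and it sidesteps entirely the ``orthogonal test sequences'' obstacle you correctly identified as the hard part of your plan.
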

Note that the theorem applies whenever $\lambdah(K)>0$, where $\lambdah$ is the Haar measure on $\Gh$, because it then suffices to choose $\mu_1 \in \Prob(K)$ in the same measure class as $\lambdah|_K$. So, for $G$ as in Theorem \ref{thm.non-classifiable-tau}, the equivalence relation on $\Prob(\Gh)$ given by equality of $\tau$-topologies is not classifiable by countable structures. In particular, it is not smooth, meaning that it is impossible to define a standard Borel space of topologies on $G$ in such a way that $\mu \mapsto \tau(\mu)$ becomes a Borel map.

\begin{proof}
Denote by $\approx$ the equivalence relation on $\Prob(K)$ given by mutual absolute continuity. Note that $\mathord{\approx} \subset \mathord{\sim}$. Since $K$ is the support of the probability measure $\mu_1$ whose Fourier transform $\widehat{\mu_1}$ tends to zero at infinity, it follows in particular that $\mu_1$ has no atoms, so that $K$ has no isolated points. By \cite[Theorem 2]{KS99}, the equivalence relation $\approx$ is generically $S_\infty$-ergodic. A fortiori, the equivalence relation $\sim$ is generically $S_\infty$-ergodic.

To prove that $\sim$ has meager equivalence classes, choose an atomic $\mu_0 \in \Prob(K)$ such that the set of atoms of $\mu_0$ is dense in $K$.

{\bf Claim.} If $g_n \in G$ is such that $g_n \recht \infty$ and $\widehat{\mu_0}(g_n) \recht 1$ and if $a \in [0,1]$, then
\begin{equation}\label{eq.myset}
\bigl\{ \mu \in \Prob(K) \bigm| \;\text{there exists a subsequence $g_{n_k}$ such that $\widehat{\mu}(g_{n_k}) \recht a$}\;\bigr\}
\end{equation}
is comeager in $\Prob(K)$.

Note that the set in \eqref{eq.myset} equals
$$\bigcap_{k \geq 1} \bigcap_{n_0 \geq 1} P(k,n_0) \quad\text{where}\quad P(k,n_0) = \bigcup_{n \geq n_0} \bigl\{ \mu \in \Prob(K) \bigm| |\widehat{\mu}(g_n) - a| < 1/k \bigr\} \; .$$
Since $P(k,n_0)$ is a union of open sets, to prove the claim, it suffices to prove that $P(k,n_0)$ is dense in $\Prob(K)$. For $i \in \{0,1\}$, denote by $P_i \subset \Prob(K)$ the set of probability measures that are absolutely continuous w.r.t.\ $\mu_i$. Since both $\mu_0$ and $\mu_1$ have support $K$, it follows that $P_i \subset \Prob(K)$ is dense for every $i \in \{0,1\}$. Choose $\eta_i \in P_i$ and define $\eta = a \eta_0 + (1-a) \eta_1$. Since $\widehat{\mu_0}(g_n) \recht 1$, we get that $\pi_{\mu_0}(g_n) \recht 1$ strongly and thus, $\widehat{\eta_0}(g_n) \recht 1$. Since $\widehat{\mu_1}$ tends to zero at infinity, the representation $\pi_{\mu_1}$ is mixing and it follows that $\widehat{\eta_1}(g_n) \recht 0$. So, $\widehat{\eta}(g_n) \recht a$ and $\eta \in P(k,n_0)$. Thus, $aP_0 + (1-a)P_1 \subset P(k,n_0)$. Since $P_i \subset \Prob(K)$ is dense for every $i \in \{0,1\}$, it follows that $P(k,n_0) \subset \Prob(K)$ is dense as well.

Assume that the equivalence class $[\mu]_\sim$ of some $\mu \in \Prob(K)$ is not meager. Since the closure of $\pi_{\mu_0}(G)$ is compact while $G$ is noncompact, we can choose as follows a sequence $g_n \in G$ such that $g_n \recht \infty$ and $\pi_{\mu_0}(g_n) \recht 1$ strongly. Starting with any sequence $h_n \in G$ such that $h_n \recht \infty$, after passage to a subsequence, we may assume that $\pi_{\mu_0}(h_n)$ converges strongly. Taking a sequence $k_n \in \N$ that tends to infinity sufficiently fast, the sequence $g_n = h_{k_n} h_n^{-1}$ has the required properties.

In particular, $\widehat{\mu_0}(g_n) \recht 1$. Since the nonmeager set $[\mu]_\sim$ intersects every comeager set, by the claim above, there exists $\eta_0 \in [\mu]_\sim$ and a subsequence $g_{n_k}$ such that $\lim_k \widehat{\eta_0}(g_{n_k}) = 1$. Applying the claim above to the sequence $(g_{n_k})$, there exists $\eta_1 \in [\mu]_\sim$ and a subsequence $g_{n_{k_l}}$ such that $\lim_l \widehat{\eta_1}(g_{n_{k_l}}) = 0$. It follows that $g_{n_{k_l}} \recht e$ in the topology $\tau(\eta_0)$, while $g_{n_{k_l}} \not\recht e$ in the topology $\tau(\eta_1)$. Since $\tau(\eta_0) = \tau(\mu) = \tau(\eta_1)$, we have reached a contradiction. So we have proven that the equivalence relation $\sim$ has meager equivalence classes.
%
\end{proof}

\begin{remark}
Let $M$ be the free Araki-Woods factor associated with an orthogonal representation $(U_t)_{t \in \R}$ of $\R$ on a real Hilbert space $\cH_\R$ of dimension at least $2$. In \cite[Corollary 8.6]{Sh97}, it is proven that $M$ is full and that $\tau(M)$ coincides with the weakest topology on $\R$ that makes the map $\R \recht O(\cH_\R) : t \mapsto U_t$ continuous (see \cite[Th\'{e}or\`{e}me 2.7]{Va04} for the fully general case). So also from Theorem \ref{thm.non-classifiable-tau}, one can deduce that free Araki-Woods factors cannot be classified by countable structures.
\end{remark}

In the case where $G = \R$, the non-classification of $\tau$-topologies in Theorem \ref{thm.non-classifiable-tau} can be made concrete as follows. Define the compact space $K = \{-1,0,1\}^\N$ and the continuous map
$$\theta : K \recht \R : \theta(x) = \sum_{n=0}^\infty x_n 2^{-n} \; .$$
Consider the Polish space $X = (0,1/4)^\N$ and define for every $a \in X$, the probability measure $\nu_a$ on $K$ given by
$$\nu_a = \prod_{n=1}^\infty \left( (1-a_n) \delta_0 + \frac{a_n}{2} \delta_{-1} + \frac{a_n}{2} \delta_1 \right) \; .$$
Note that $a \mapsto \nu_a$ is a continuous map from $X$ to $\Prob(K)$ equipped with the weak topology. Pushing forward with $\theta$, we define the probability measure $\mu_a$ on $\R$ given by $\mu_a = \theta_*(\nu_a)$. The following proposition describes the $\tau$-topology of the measure $\mu_a$.

\begin{proposition}\label{prop.concrete-tau}
For every $a \in X$, the topology $\tau(\mu_a)$ is induced by the translation invariant metric $d_a$ on $\R$ defined by
\begin{equation}\label{eq.def-metric-da}
d_a(t,s) = \left(\sum_{n=0}^\infty a_n \, d_\Z\bigl(2^{-n}(t-s)\bigr)^2 \right)^{1/2} \; ,
\end{equation}
where $d_\Z$ denotes the distance of a real number to $\Z \subset \R$.
\end{proposition}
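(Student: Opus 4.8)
The plan is to compute the Fourier transform $\widehat{\mu_a}$ in closed form, compare it two‑sidedly with the quantity defining $d_a$, and then use that the topology $\tau(\mu_a)$ is controlled entirely by the single function $\widehat{\mu_a}$. I identify $\widehat{\R}$ with $\R$ via the pairing $\langle s,g\rangle = e^{2\pi i sg}$, so that $\widehat{\mu_a}(g) = \int_{\R} e^{2\pi i sg}\,d\mu_a(s)$. First I would compute this transform: since $\mu_a = \theta_*(\nu_a)$, the measure $\nu_a$ is a product measure, and the partial products of $e^{2\pi i g\theta(x)} = \prod_n e^{2\pi i g x_n 2^{-n}}$ converge uniformly on the compact set $K$, one gets
$$\widehat{\mu_a}(g) = \int_K e^{2\pi i g\theta(x)}\,d\nu_a(x) = \prod_n\Bigl((1-a_n) + a_n\cos(2\pi\,2^{-n}g)\Bigr) = \prod_n\bigl(1 - c_n(g)\bigr) \; ,$$
where I write $c_n(g) = 2a_n\sin^2(\pi\,2^{-n}g) = a_n\bigl(1-\cos(2\pi\,2^{-n}g)\bigr)$. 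Note that $\widehat{\mu_a}$ is real‑valued (as $\mu_a$ is symmetric), that $0 \le c_n(g) \le 2a_n < 1/2$, and that $c_n(g) = O(4^{-n})$ for fixed $g$; hence the infinite product converges and $0 < \widehat{\mu_a}(g) \le 1$.

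Next come the elementary estimates. Since $\sin^2(\pi u) = \sin^2(\pi\,d_\Z(u))$ and $4t^2 \le \sin^2(\pi t) \le \pi^2 t^2$ for $t\in[0,1/2]$, summing the resulting termwise bounds for $c_n(g)$ over $n$ gives
$$8\,d_a(g,0)^2 \;\le\; \Sigma(g) := \sum_n c_n(g) \;\le\; 2\pi^2\,d_a(g,0)^2 \; .$$
Combining this with the inequalities $1-\Sigma(g) \le \prod_n(1-c_n(g)) \le e^{-\Sigma(g)}$, valid since $c_n(g)\in[0,1)$, yields
$$1 - e^{-8\,d_a(g,0)^2} \;\le\; 1 - \widehat{\mu_a}(g) \;\le\; 2\pi^2\,d_a(g,0)^2 \; .$$

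Finally I would match the two topologies through their neighborhood filters at $0$. On the one hand, $d_a$ is indeed a translation invariant metric: the defining series converges because $d_\Z(2^{-n}(t-s)) = 2^{-n}|t-s|$ for $n$ large, the triangle inequality follows from that for $d_\Z$ together with Minkowski's inequality, and $d_a(t,s)=0$ forces $2^{-n}(t-s)\in\Z$ for all $n$, hence $t=s$; its metric topology has the balls $B_\epsilon = \{g : d_a(g,0) < \epsilon\}$ as a neighborhood base at $0$. On the other hand, $\tau(\mu_a)$ is a group topology, being the initial topology of the homomorphism $\pi_{\mu_a}$, and since $L^\infty(\widehat{\R},\mu_a)$ is dense in $L^2(\widehat{\R},\mu_a)$ and
$$\|(\pi_{\mu_a}(g)-1)\xi\|^2 = \int_{\widehat{\R}}|\om(g)-1|^2\,|\xi(\om)|^2\,d\mu_a(\om) \;\le\; 2\|\xi\|_\infty^2\bigl(1-\widehat{\mu_a}(g)\bigr) \quad\text{for bounded }\xi \; ,$$
every basic $\tau(\mu_a)$‑neighborhood $\bigcap_i\{g : \|(\pi_{\mu_a}(g)-1)\xi_i\| < \epsilon_i\}$ of $0$ contains a set $V_\delta := \{g : \widehat{\mu_a}(g) > 1-\delta\}$; conversely $V_\delta = \{g : \|(\pi_{\mu_a}(g)-1)1\|^2 < 2\delta\}$ is itself $\tau(\mu_a)$‑open, so $\{V_\delta\}_{\delta>0}$ is a neighborhood base at $0$ for $\tau(\mu_a)$. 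The displayed two‑sided bound then gives $B_{\sqrt{\delta/(2\pi^2)}} \subseteq V_\delta$ and $V_{1-e^{-8\epsilon^2}} \subseteq B_\epsilon$, so the two neighborhood bases are mutually cofinal; as both topologies are translation invariant, they coincide.

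The computation itself is routine; the point that needs care is the reduction in the last paragraph, namely that $\{V_\delta\}$ — and not merely the recorded sequential criterion that $g_n\to 0$ in $\tau(\mu)$ iff $\widehat{\mu}(g_n)\to 1$ — is a genuine neighborhood base for $\tau(\mu_a)$, so that the two topologies are actually equal rather than just sharing the same convergent sequences. This is where one uses that the $\tau$‑topology is governed by the single vector $1\in L^2(\widehat{\R},\mu_a)$, via the density of bounded functions; everything else reduces to the Fourier expansion above together with the inequalities $\sin(\pi t)\ge 2t$ on $[0,1/2]$ and $1-x\le e^{-x}$.
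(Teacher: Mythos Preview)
Your proof is correct and follows the same route as the paper: compute $\widehat{\mu_a}$ as an infinite product and compare it with $d_a$ via the elementary estimates relating $1-\cos(2\pi u)$ to $d_\Z(u)^2$. The paper's version is terser---it simply records that $\widehat{\mu_a}(t_k)\to 1$ iff $\sum_n a_n\bigl(1-\cos(2\pi t_k 2^{-n})\bigr)\to 0$ iff $d_a(t_k,0)\to 0$, and stops there. The extra work you do with neighborhood bases is sound but not strictly necessary: $\tau(\mu_a)$ is the initial topology of a homomorphism into the Polish (hence metrizable) group $\cU(L^2(\widehat{\R},\mu_a))$, so it is itself pseudometrizable and therefore first countable, and having the same convergent sequences as the $d_a$-topology already forces the two group topologies to coincide.
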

\begin{proof}
Fix $a \in X$. For every $t \in \R$, we have
$$\widehat{\mu_a}(t) = \int_K \exp(2\pi i \, \theta(x) \, t) \, d\nu_a(x) = \prod_{n=0}^\infty \bigl(1 - a_n(1-\cos(2 \pi t 2^{-n}))\bigr) \; .$$
Since $a_n \in (0,1/4)$ for all $n$, it follows that for every sequence $t_k \in \R$, the following holds:
$$\widehat{\mu_a}(t_k) \recht 0 \quad\text{if and only if}\quad \sum_{n=0}^\infty a_n(1-\cos(2 \pi t_k 2^{-n})) \recht 0 \quad\text{if and only if}\quad d_a(t_k,0) \recht 0 \; .$$
This concludes the proof of the proposition.
\end{proof}

Combining Proposition \ref{prop.concrete-tau} with the following result, we obtain a more concrete proof for the non-classification of $\tau$-topologies on $\R$.

\begin{proposition}\label{prop.concrete-non-classif-tau}
Define the equivalence relation $\sim$ on $X$ given by $a \sim b$ if and only if the metrics $d_a$ and $d_b$ defined in \eqref{eq.def-metric-da} induce the same topology on $\R$. Then $\sim$ has meager equivalence classes and is generically $S_\infty$-ergodic. In particular, $\sim$ is not classifiable by countable structures.
\end{proposition}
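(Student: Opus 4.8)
The plan is to deduce Proposition \ref{prop.concrete-non-classif-tau} directly from Theorem \ref{thm.non-classifiable-tau} by exhibiting a continuous map that turns $(X,\sim)$ into a faithful copy of the $\tau$-equivalence relation on $\Prob(K)$ for a suitable $K\subset\R$. Concretely, I would take $G=\R$, so $\Gh=\R$, and I would use the closed set $K=\theta(\{-1,0,1\}^\N)\subset\R$ together with the measures $\mu_a=\theta_*(\nu_a)$ from the discussion preceding Proposition \ref{prop.concrete-tau}. By Proposition \ref{prop.concrete-tau}, the metric $d_a$ induces exactly the topology $\tau(\mu_a)$, so $a\sim b$ in the sense of Proposition \ref{prop.concrete-non-classif-tau} is equivalent to $\tau(\mu_a)=\tau(\mu_b)$. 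Thus the map $a\mapsto\mu_a$ is a continuous reduction from $(X,\sim)$ to the relation $\mu\sim\mu'\iff\tau(\mu)=\tau(\mu')$ on $\Prob(K)$, and I would also produce a continuous section (a Borel, in fact continuous, map in the other direction) so that the two relations are literally isomorphic on comeager sets; actually, since Theorem \ref{thm.non-classifiable-tau}'s conclusions (meager classes, generic $S_\infty$-ergodicity) pull back along a continuous reduction whose image is ``large enough'', I only need the reduction plus a little care about genericity.

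First I would verify the hypotheses of Theorem \ref{thm.non-classifiable-tau} for this $K$: I need a probability measure $\mu_1\in\Prob(K)$ with support $K$ whose Fourier transform tends to $0$ at infinity. The natural candidate is $\nu_c$ (pushed forward by $\theta$) for a constant sequence $c_n\equiv c\in(0,1/4)$, or more safely a measure equivalent to such a $\nu_c$; its support is all of $K$ since each factor has full support on $\{-1,0,1\}$, and $\widehat{\mu_c}(t)=\prod_n(1-c(1-\cos(2\pi t2^{-n})))$. One checks this tends to $0$ at infinity: for $|t|$ large there are boundedly many dyadic scales $2^{-n}$ at which $\cos(2\pi t2^{-n})$ is close to $1$, while infinitely many scales contribute a factor bounded away from $1$, so the infinite product is forced to $0$; this is a short Riemann–Lebesgue-type estimate (alternatively invoke that a Riesz-product-type measure with these parameters has vanishing Fourier coefficients). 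Once $\mu_1$ is in hand, Theorem \ref{thm.non-classifiable-tau} gives that $\tau$-equality on $\Prob(K)$ has meager classes and is generically $S_\infty$-ergodic.

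The transfer step is then: meager equivalence classes of $\sim$ on $\Prob(K)$ pull back to meager classes of $\sim$ on $X$ because $a\mapsto\mu_a$ is continuous and, one checks, the $\sim$-class of $a$ in $X$ maps into the $\sim$-class of $\mu_a$ in $\Prob(K)$, whose preimage is meager (a continuous preimage of a meager set with the Baire property is meager provided the map is, say, category-preserving — here it is enough that the map is continuous and open onto its image, or one simply notes that the preimage of a meager $F_\sigma$-ish set is meager when the map is continuous, which suffices since the classes in question are already contained in meager $F_\sigma$ sets by the proof of Theorem \ref{thm.non-classifiable-tau}). Generic $S_\infty$-ergodicity transfers along a continuous homomorphism of equivalence relations whose domain has no isolated-type degeneracies: given a Baire measurable morphism $f$ from $(X,\sim)$ to an $S_\infty$-orbit equivalence relation, the composition with a continuous section $\Prob(K)\dashrightarrow X$ — or better, directly, pulling back along $a\mapsto\mu_a$ a witness comeager set in $\Prob(K)$ — yields a comeager set in $X$ sent to a single orbit. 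The one genuinely technical point, and the step I expect to be the main obstacle, is making this last transfer rigorous: a continuous reduction does not automatically transfer generic $S_\infty$-ergodicity unless the reduction is sufficiently surjective onto a comeager set (a ``category-preserving'' or ``open'' condition). I would handle this by showing the image of $a\mapsto\mu_a$ is ``comeager-testing'' — e.g.\ that its preimages of comeager sets are comeager — which reduces to checking that $a\mapsto\mu_a$ is continuous with dense image in a suitable sub-simplex and that the pull-back of a Baire-category-one set is Baire-category-one; concretely, I would compose with a left inverse defined on a comeager set or simply re-run the proof of Theorem \ref{thm.non-classifiable-tau} verbatim inside $X$, replacing $\Prob(K)$ everywhere by $X$ and the weak topology by the product topology on $(0,1/4)^\N$, using $b^{(0)},b^{(1)}\in X$ with $\widehat{\mu_{b^{(0)}}}$-atoms dense and $\widehat{\mu_{b^{(1)}}}$ mixing, which is the cleanest route and avoids any delicate transfer lemma.
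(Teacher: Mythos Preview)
There is a genuine gap. The map $a\mapsto\mu_a$ is a continuous reduction from $(X,\sim)$ \emph{to} $(\Prob(K),\sim_\tau)$, but generic $S_\infty$-ergodicity and meagerness of classes do not pull back along a reduction: you would need a reduction, or at least a category-preserving map, going the other way. There is no hope of repairing this with a section defined on a comeager set, because every $\mu_a$ is symmetric under $t\mapsto -t$, so the image $\{\mu_a:a\in X\}$ lies in a closed nowhere dense subset of $\Prob(K)$. Your fallback of ``re-running the proof of Theorem~\ref{thm.non-classifiable-tau} inside $X$ with an atomic $b^{(0)}$ and a mixing $b^{(1)}$'' also fails, for a reason that simultaneously invalidates your proposed $\mu_1=\mu_c$: \emph{no} $a\in X$ has $\widehat{\mu_a}\to 0$ at infinity. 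Indeed, for every $a\in X$ and every $m\in\N$,
\[
\widehat{\mu_a}(2^m)=\prod_{k\ge 1}\bigl(1-a_{m+k}\,(1-\cos(2\pi\, 2^{-k}))\bigr)\ \ge\ \prod_{k\ge 1}\bigl(1-\tfrac14\,(1-\cos(2\pi\, 2^{-k}))\bigr)\ >\ 0,
\]
a fixed positive constant independent of $a$ and $m$. Thus the family $(\mu_a)_{a\in X}$ contains no mixing measure, and the atomic/mixing dichotomy that drives the proof of Theorem~\ref{thm.non-classifiable-tau} is simply unavailable inside $X$.

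The paper argues entirely within $X$ and does not invoke Theorem~\ref{thm.non-classifiable-tau}. For generic $S_\infty$-ergodicity it shows that the $\ell^1$-relation $a\approx b\Leftrightarrow\sum_n|a_n-b_n|<\infty$ is a subrelation of $\sim$, and then transports Hjorth's turbulence result for the $\ell^1$-coset relation on $\R^\N$ through a homeomorphism $\R^\N\to X$. For meager classes it uses the explicit formula $d_a(2^m,0)^2=\sum_{k\ge 1}a_{m+k}\,2^{-k}$ to produce, in any nonmeager $\cU\subset X$, points $a,b\in\cU$ and integers $n_k\to\infty$ with $d_a(2^{n_k},0)\to 0$ but $d_b(2^{n_k},0)\not\to 0$; this is the content of Lemma~\ref{lem.a-comeager}.
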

\begin{proof}
Define the equivalence relation $\approx$ on $X$ given by $a \approx b$ if and only if $\sum_{n=0}^\infty |a_n-b_n| < \infty$. Let $a \in X$ and $t_k \in \R$ a sequence such that $d_a(t_k,0) \recht 0$. For every fixed $n$, we have that $a_n > 0$ and thus that $\lim_k d_\Z(2^{-n}t_k)= 0$. It follows that $\approx \, \subset \, \sim$. Define the homeomorphism
$$\vphi : \R^\N \recht X : \vphi(x)_n = \vphi_0(x_n) \quad\text{where}\quad \vphi_0(y) = \frac{1}{8} + \frac{1}{4\pi} \arctan(y) \; .$$
Define the equivalence relation $\approx_1$ on $\R^\N$ given by $a \approx_1 b$ if and only if $a-b \in \ell^1_\R(\N)$. Since $\vphi_0$ is a contraction, when $a \approx_1 b$, also $\vphi(a) \approx \vphi(b)$ and thus, $\vphi(a) \sim \vphi(b)$. Since $\vphi$ is a homeomorphism and since, by \cite[Proposition 3.25]{Hj97}, $\approx_1$ is generically $S_\infty$-ergodic, also $\sim$ is generically $S_\infty$-ergodic.

Let $\cU \subset X$ be a non meager set. By Lemma \ref{lem.a-comeager} below, we can take $a \in \cU$ and a sequence $n_k \in \N$ such that $n_k \recht \infty$ and $d_a(2^{n_k},0) \recht 0$. Also by Lemma \ref{lem.a-comeager} below, we can take $b \in \cU$ such that $d_b(2^{n_k},0) \not\recht 0$. So the topologies on $\R$ induced by $d_a$ and $d_b$ are different. Since $a,b \in \cU$ and $\cU$ was an arbitrary non meager set, it follows that $\sim$ has meager equivalence classes.
\end{proof}

\begin{lemma}\label{lem.a-comeager}
The set of $a \in X$ for which there exists a sequence $n_k \in \N$ such that $n_k \recht \infty$ and $d_a(2^{n_k},0) \recht 0$ is comeager.

Given a sequence $n_k \in \N$ with $n_k \recht \infty$, the set of $b \in X$ such that $d_b(2^{n_k},0) \not\recht 0$ is comeager.
\end{lemma}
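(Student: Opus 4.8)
The plan is to treat the two statements symmetrically, using a Baire category argument in the Polish space $X = (0,1/4)^\N$. Recall that $d_a(2^{n_k},0)^2 = \sum_{n=0}^\infty a_n\, d_\Z(2^{n_k - n})^2$. Since $2^{n_k-n} \in \Z$ whenever $n \leq n_k$, only the tail terms $n > n_k$ contribute, so $d_a(2^{n_k},0)^2 = \sum_{n > n_k} a_n\, d_\Z(2^{-(n-n_k)})^2 = \sum_{m \geq 1} a_{n_k+m}\, d_\Z(2^{-m})^2$. For $m \geq 1$ one has $2^{-m} \in (0,1/2]$ so $d_\Z(2^{-m}) = 2^{-m}$ exactly, and hence $d_a(2^{n_k},0)^2 = \sum_{m \geq 1} a_{n_k+m}\, 4^{-m}$. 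This is the key simplification: the condition $d_a(2^{n_k},0) \recht 0$ is equivalent to $\sum_{m\geq 1} a_{n_k+m} 4^{-m} \recht 0$, a purely ``tail'' condition on the coordinates of $a$, which makes both statements amenable to the standard ``generic coordinates can be made small/large infinitely often'' arguments.

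For the first statement, the natural strategy is to write the set
$$
A = \bigl\{ a \in X \bigm| \exists (n_k),\ n_k \recht \infty,\ d_a(2^{n_k},0) \recht 0 \bigr\}
$$
as a countable intersection of open dense sets. First I would fix a decreasing sequence $\varepsilon_j \recht 0$ and observe that $a \in A$ precisely when for every $j$ the set of $n$ with $\sum_{m\geq 1} a_{n+m} 4^{-m} < \varepsilon_j$ is infinite, i.e.\ $A = \bigcap_j \bigcap_{N} \bigcup_{n \geq N} U_{j,n}$, where $U_{j,n} = \{ a \in X \mid \sum_{m\geq 1} a_{n+m} 4^{-m} < \varepsilon_j \}$. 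Each $U_{j,n}$ is clearly open in $X$ (it depends only on finitely many coordinates up to an arbitrarily small error), and for fixed $j$ and $N$ the union $\bigcup_{n \geq N} U_{j,n}$ is dense: given any basic open box that constrains only the coordinates $a_0,\dots,a_{L}$, pick $n \geq \max(N,L+1)$ and note that the box imposes no constraint on $a_{n+1}, a_{n+2},\dots$, so we may freely push those coordinates down toward $0$ (they are allowed to range over all of $(0,1/4)$), making $\sum_{m\geq 1} a_{n+m} 4^{-m}$ as small as we like, in particular below $\varepsilon_j$. Hence each $\bigcup_{n\geq N} U_{j,n}$ is open dense, and $A$ is comeager by the Baire category theorem.

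For the second statement, with $(n_k)$ now fixed, I would similarly write the set $B = \{ b \in X \mid d_b(2^{n_k},0) \not\recht 0 \}$. Here ``$\not\recht 0$'' means there is $\delta > 0$ with $d_b(2^{n_k},0)^2 > \delta$ for infinitely many $k$; taking a countable union over $\delta = 1/j$, it suffices to show that for a single fixed $\delta$, the set $B_\delta = \{ b \mid \sum_{m\geq 1} b_{n_k+m} 4^{-m} > \delta \text{ for infinitely many } k\}$ is comeager — or rather, since the union $\bigcup_j B_{1/j}$ only needs to be comeager, it is enough that one $B_\delta$ is comeager for a suitably small $\delta$; the natural choice is any $\delta < \tfrac14 \cdot \tfrac{4^{-1}}{1-4^{-1}} = \tfrac1{12}$, so that the constraint $b_{n_k+1} \in (0,1/4)$ alone does not already force $\sum_{m} b_{n_k+m}4^{-m}$ to exceed or fall below it. Concretely, write $B_\delta = \bigcap_N \bigcup_{k \geq N} V_{k}$ with $V_k = \{ b \in X \mid \sum_{m\geq 1} b_{n_k+m} 4^{-m} > \delta\}$; each $V_k$ is open, and $\bigcup_{k \geq N} V_k$ is dense by the same argument as before — any basic box constrains only finitely many coordinates, so choosing $k \geq N$ large enough that $n_k$ exceeds all constrained indices, we are free to push $b_{n_k+1},\dots$ up toward $1/4$, and since $\sum_{m\geq 1} \tfrac14 \cdot 4^{-m} = \tfrac1{12} > \delta$, a finite such perturbation already lands in $V_k$. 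Thus $B_\delta$ is comeager, hence so is $B \supseteq B_\delta$.

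The main obstacle — really the only point requiring care — is bookkeeping the reduction to a tail condition: one must check that the binary expansion $2^{n_k} = \sum_n x_n 2^{-n+n_k}$ interacts with $d_\Z$ exactly as claimed, namely that the integer part drops out cleanly and $d_\Z(2^{-m}) = 2^{-m}$ on the relevant range, so that $d_a(2^{n_k},0)^2$ is literally $\sum_{m\geq 1} a_{n_k+m} 4^{-m}$ with no error terms. Once that identity is in hand, both halves are routine applications of Baire category, exploiting that each coordinate of points in $X$ ranges over the full interval $(0,1/4)$ and that any basic open set constrains only finitely many coordinates, leaving a cofinite tail completely free to be driven to $0$ or up toward $1/4$.
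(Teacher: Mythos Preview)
Your proposal is correct and follows essentially the same Baire category strategy as the paper: both reduce to the tail identity $d_a(2^m,0)^2 = \sum_{k\geq 1} a_{m+k}\, d_\Z(2^{-k})^2$ and then exhibit the relevant sets as countable intersections of open dense sets. The only cosmetic difference is that the paper first extracts the finite-coordinate bounds $\tfrac12 a_{m+1} \leq d_a(2^m,0)^2 \leq 2^{-n-2} + \sum_{k=1}^n a_{m+k}2^{-k}$ and works with those (so density reduces to a condition on a single coordinate, $b_{n_j+1}>1/8$, for the second part), whereas you work directly with the continuous functional $a \mapsto \sum_{m\geq 1} a_{n+m}4^{-m}$; incidentally your exponent $4^{-m}$ is the correct value of $d_\Z(2^{-m})^2$.
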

\begin{proof}
Note that for all $a \in X$ and $m \in \N$, we have that
$$d_a(2^m,0)^2 = \sum_{k=1}^\infty a_{m+k} 2^{-k} \; .$$
Using that $a_j \in (0,1/4)$ for all $j \in \N$, it follows that for all $a \in X$ and $m,n \in \N$,
\begin{equation}\label{eq.two-ineq}
\frac{1}{2} a_{m+1} \leq d_a(2^m,0)^2 \leq 2^{-n-2} + \sum_{k=1}^n a_{m+k} 2^{-k} \; .
\end{equation}
Using the second inequality in \eqref{eq.two-ineq}, it follows that the first set in the lemma contains
$$\bigcap_{n=1}^\infty \bigcup_{m =n}^\infty \Bigl\{ a \in X \Bigm| \sum_{k=1}^n a_{m+k} 2^{-k} < 2^{-n} \Bigr\} \; ,$$
which is a countable intersection of open dense sets.

Given a sequence $n_k \in \N$ with $n_k \recht \infty$, using the first inequality in \eqref{eq.two-ineq}, it follows that the second set in the lemma contains
$$\bigcap_{k=1}^\infty \bigcup_{j = k}^\infty \bigl\{ b \in X \bigm| b_{n_j+1} > 1/8 \bigr\} \; ,$$
which also is a countable intersection of open dense sets.
\end{proof}

\section{\boldmath Non-classification of actions on free product II$_1$ factors} 

Let $G$ be a locally compact, second countable group and $M$ a von Neumann algebra with separable predual. An \emph{action} of $G$ on $M$ is a continuous group homomorphism from $G$ to the Polish group $\Aut(M)$. We denote this space of actions as $\Hom(G,\Aut(M))$. Equipped with the topology of uniform convergence on compact sets, $\Hom(G,\Aut(M))$ is a Polish space.

There are several natural notions of equivalence of actions. Given $\al,\be \in \Hom(G,\Aut(M))$, one says that
\begin{enumlist}
\item $\al$ and $\be$ are \emph{conjugate} if there exists an element $\theta \in \Aut(M)$ such that $\theta \circ \be_g \circ \theta^{-1} = \al_g$ for all $g \in G$~;
\item $\al$ and $\be$ are \emph{cocycle conjugate} if there exists a continuous map $u : G \recht \cU(M)$ and an element $\theta \in \Aut(M)$ such that
    $$\theta \circ \be_g \circ \theta^{-1} = (\Ad u_g) \circ \al_g \quad\text{and}\quad u_{gh} = u_g \al_g(u_h) \quad\text{for all $g,h \in G$~;}$$
\item $\al$ and $\be$ are \emph{outer conjugate} if there exists an element $\theta \in \Aut(M)$ such that for all $g \in G$, $\theta \circ \be_g \circ \theta^{-1}$ and $\al_g$ have the same image in the outer automorphism group $\Out(M) = \Aut(M)/\Inn(M)$.
\end{enumlist}

Clearly, conjugacy implies cocycle conjugacy, and cocycle conjugacy implies outer conjugacy, but the converse implications need not hold.

We recall from \cite{Sh97} the following definition of the $\tau$-invariant of a group action on a full factor, which is analogous to the $\tau$-invariant for von Neumann algebras (see \cite{Co74a} and the beginning of Section \ref{sec.non-classification-tau-topologies}).

\begin{definition}[{\cite[Definition 8.1]{Sh97}}]
Let $G$ be a locally compact, second countable group and $M$ a full factor with separable predual. For every action $\al \in \Hom(G,\Aut(M))$, the topology $\tau(\al)$ is defined as the weakest topology on $G$ that makes the map $G \recht \Out(M) : g \mapsto \al_g$ continuous.
\end{definition}

Note that $\tau(\al)$ is invariant under outer conjugacy. We can then combine Theorem \ref{thm.non-classifiable-tau} with the construction of Gaussian actions to prove that for noncompact abelian groups $G$, cocycle conjugacy and outer conjugacy of actions of $G$ on the free group factors $L(\F_n)$, and more generally on arbitrary free products $(M,\tau) = (M_1,\tau_1)*(M_2,\tau_2)$ of a diffuse and a nontrivial tracial von Neumann algebra, are not classifiable by countable structures. In particular, outer conjugacy of a single automorphism of $L(\F_n)$ or of such a free product (i.e.\ an action of $G=\Z$) is not classifiable by countable structures. Note that in \cite[Theorem 6.2]{KLP14}, it was proven that conjugacy of a single automorphism of a free product of II$_1$ factors is not classifiable by countable structures.

\begin{theorem}\label{thm.non-class-actions}
Let $G$ be a second countable, locally compact, noncompact, abelian group. Let $(M,\tau) = (M_1,\tau_1)*(M_2,\tau_2)$ be the free product of two von Neumann algebras equipped with a faithful normal tracial state. Assume that $M_1$ is diffuse and that $M_2 \neq \C 1$. Then $M$ is a full II$_1$ factor. None of the equivalence relations of conjugacy, cocycle conjugacy or outer conjugacy on the space $\Hom(G,\Aut(M))$ of actions of $G$ on $M$ is classifiable by countable structures.
\end{theorem}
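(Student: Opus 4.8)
The plan is to reduce to Theorem~\ref{thm.non-classifiable-tau}. Since $G$ is noncompact, $\Gh$ is non-discrete, so we may fix a closed set $K\subseteq\Gh$ that carries a probability measure $\mu_1$ with support $K$ whose Fourier transform tends to zero at infinity; for instance, take $\mu_1$ in the measure class of the Haar measure of $\Gh$ restricted to a compact set of positive Haar measure, so that $\widehat{\mu_1}\recht0$ at infinity by the Riemann--Lebesgue lemma. Then $K$ has no isolated points, so by \cite[Theorem 2]{KS99} the mutual-absolute-continuity relation $\approx$ on $\Prob(K)$ is generically $S_\infty$-ergodic, while by Theorem~\ref{thm.non-classifiable-tau} the relation $\sim$ on $\Prob(K)$ given by $\tau(\mu)=\tau(\mu')$ has meager classes. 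Hence any equivalence relation $E$ on $\Prob(K)$ with $\mathord{\approx}\subseteq E\subseteq\mathord{\sim}$ is generically $S_\infty$-ergodic (being a coarsening of $\approx$) and has meager classes (being a refinement of $\sim$), hence is not classifiable by countable structures; moreover, if such an $E$ is the pullback of an equivalence relation $F$ under a Borel map, then $F$ is not classifiable by countable structures either.

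So it suffices to construct a Borel map $\mu\mapsto\sigma_\mu$ from $\Prob(K)$ to $\Hom(G,\Aut(M))$ such that (a) $\sigma_\mu$ and $\sigma_{\mu'}$ are conjugate whenever $\mu\approx\mu'$, and (b) $\tau(\sigma_\mu)=\tau(\mu)$ for all $\mu$. Indeed, writing $E_{\mathrm c}\subseteq E_{\mathrm{cc}}\subseteq E_{\mathrm{oc}}$ for the relations of conjugacy, cocycle conjugacy and outer conjugacy on $\Hom(G,\Aut(M))$ and $\widehat E_{\mathrm c}\subseteq\widehat E_{\mathrm{cc}}\subseteq\widehat E_{\mathrm{oc}}$ for their pullbacks along this map, property (a) yields $\mathord{\approx}\subseteq\widehat E_{\mathrm c}$, and since $\tau(\cdot)$ is an outer conjugacy invariant, $\sigma_\mu\,E_{\mathrm{oc}}\,\sigma_{\mu'}$ implies $\tau(\sigma_\mu)=\tau(\sigma_{\mu'})$, which by (b) gives $\widehat E_{\mathrm{oc}}\subseteq\mathord{\sim}$. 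Thus each of $\widehat E_{\mathrm c},\widehat E_{\mathrm{cc}},\widehat E_{\mathrm{oc}}$ lies between $\approx$ and $\sim$, and by the previous paragraph none of $E_{\mathrm c},E_{\mathrm{cc}},E_{\mathrm{oc}}$ is classifiable by countable structures.

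For the construction, recall that $M=M_1*M_2$ is a full II$_1$ factor (this is part of the statement, and is well known). Given $\mu\in\Prob(K)$, view the representation $\pi_\mu$ of \eqref{eq.rep-pi-mu} as an orthogonal representation $\rho_\mu$ of $G$ on a real Hilbert space, and let $G\actson(Y_\mu,\nu_\mu)$ be the associated Gaussian probability-measure-preserving action; then $L^\infty(Y_\mu)$ is a diffuse abelian tracial von Neumann algebra with a continuous trace-preserving $G$-action, and this whole assignment can be realized Borel-measurably in $\mu$ inside a fixed standard probability space. Using that $M_1$ is diffuse and $M_2\neq\C 1$, one then transplants this Gaussian action onto the fixed algebra $M$ to obtain $\sigma_\mu\in\Hom(G,\Aut(M))$, Borel in $\mu$; this is done in the spirit of the construction of automorphisms of free product factors in \cite{KLP14}, now for arbitrary $G$. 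Property (a) is then easy: if $\mu\approx\mu'$, multiplication by $(d\mu/d\mu')^{1/2}$ implements a unitary equivalence $\pi_\mu\cong\pi_{\mu'}$, hence $\rho_\mu\cong\rho_{\mu'}$, hence the two Gaussian actions, and so $\sigma_\mu$ and $\sigma_{\mu'}$, are conjugate.

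The real work is (b), namely $\tau(\sigma_\mu)=\tau(\mu)$. One shows that the weakest topology on $G$ making $g\mapsto(\sigma_\mu)_g$ continuous into $\Out(M)$ coincides with the weakest topology making $\rho_\mu$ strongly continuous, by a Shlyakhtenko-type analysis of the $\tau$-invariant of a Gaussian action (cf.\ \cite[Section 8]{Sh97}); this latter topology equals $\tau(\mu)$, because $g_k\recht e$ in $\tau(\mu)$ exactly when $\widehat\mu(g_k)\recht1$, and because the symmetric powers $\rho_\mu^{\odot n}$ converge to the identity precisely when $\rho_\mu$ does. I expect the main obstacle to be the von Neumann algebraic input needed for this: one must verify that in a free product $M_1*M_2$ with $M_1$ diffuse and $M_2\neq\C 1$ an automorphism acting trivially on one free factor and non-inner on the other remains non-inner on $M$, and that the transplantation of the Gaussian action can be carried out in a Borel way compatibly with this. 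Once (a) and (b) are established, the theorem follows from the first two paragraphs together with Theorem~\ref{thm.non-classifiable-tau}.
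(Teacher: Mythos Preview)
Your strategy is exactly the paper's: build a Borel family of $G$-actions on $M$ from Gaussian actions attached to $\pi_\mu$, so that conjugacy is implied by $\mu\approx\mu'$ while outer conjugacy implies $\tau(\mu)=\tau(\mu')$, and then sandwich $\approx\subseteq\widehat E\subseteq\sim$ and invoke Theorem~\ref{thm.non-classifiable-tau} together with \cite{KS99}. The paper carries out precisely this plan (via Lemma~\ref{lem.gaussian} for the Gaussian part), so what remains is to compare how the two steps you flag as ``obstacles'' are actually handled.

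\textbf{Transplantation.} You defer this to ``the spirit of \cite{KLP14}''; the paper makes it explicit by a case split. If $M_1$ has a type~I$_n$ direct summand, one writes that summand as $L^\infty(X,\mu)\ovt B\ot M_n(\C)$ with $B$ diffuse abelian and lets $G$ act via the Gaussian action on the $L^\infty(X,\mu)$ tensor leg, trivially elsewhere. If $M_1$ is of type~II$_1$, one uses Dykema's free product identities \cite{Dy92a,Dy92b} to rewrite $M$, up to matrix amplification, as $(L^\infty(X,\mu)*L(\F_2)*P_1*P_2)\ot M_{2n}(\C)$, and again lets $G$ act through the Gaussian action on the $L^\infty(X,\mu)$ free factor. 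This is the concrete content your sketch is missing.

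\textbf{The $\tau$-invariant.} Here your identified obstacle is slightly off. Knowing that an individual automorphism that is trivial on one free summand and non-inner on the other stays non-inner on $M$ is \emph{not} enough to conclude $\tau(\sigma_\mu)=\tau(\mu)$: one needs the sequential statement that if $(\sigma_\mu)_{g_n}\to\id$ in $\Out(M)$ then already $(\sigma_\mu)_{g_n}\to\id$ in $\Aut(M)$. The paper obtains this from Ueda's fullness argument \cite[Theorem~3.7]{Ue10}: since each $(\sigma_\mu)_{g_n}$ fixes a subalgebra of the form $N_1*M_2$ with $N_1$ diffuse, any implementing unitaries $u_n$ satisfy $\|u_n x-xu_n\|_2\to0$ for $x\in N_1\cup M_2$, hence $\|u_n-\tau(u_n)1\|_2\to0$, so $\Ad u_n\to\id$ and therefore $(\sigma_\mu)_{g_n}\to\id$ in $\Aut(M)$. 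Restricting to $L^\infty(X,\mu)$ and using the Gaussian lemma then gives $g_n\to e$ in $\tau(\mu)$. Once you replace your pointwise non-innerness assertion by this asymptotic-centralizer input, your outline becomes the paper's proof.
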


Before proving Theorem \ref{thm.non-class-actions}, we need the following lemma.

\begin{lemma}\label{lem.gaussian}
Let $(X,\mu)$ be a standard nonatomic probability space and let $G$ be a second countable, locally compact, noncompact, abelian group. Denote by $\Autpmp(X,\mu)$ the Polish group of probability measure preserving automorphisms of $(X,\mu)$. There exists a continuous map
$$\be : \Probc(\Gh) \recht \Hom(G,\Autpmp(X,\mu)) : \rho \mapsto \be_\rho$$
with the following properties.
\begin{enumlist}
\item If $\rho, \eta \in \Probc(\Gh)$ are mutually absolutely continuous, there exists a $\theta \in \Autpmp(X,\mu)$ such that $\be_\rho(g) = \theta \circ \be_\eta(g) \circ \theta^{-1}$ for all $g \in G$.
\item For every $\rho \in \Probc(\Gh)$, the weakest topology on $G$ making the map $G \recht \Autpmp(X,\mu) : g \mapsto \be_\rho(g)$ continuous equals the topology $\tau(\rho)$.
\end{enumlist}
\end{lemma}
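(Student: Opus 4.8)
The plan is to construct $\be_\rho$ via the Gaussian action functor applied to the unitary representation $\pi_\rho$ from \eqref{eq.rep-pi-mu}. First I would recall the setup: given a real Hilbert space $H_\R$ carrying an orthogonal representation of $G$, the Gaussian construction produces a probability measure preserving action of $G$ on the Gaussian probability space $(X_{H_\R},\mu_{H_\R})$, and this construction is functorial -- orthogonal intertwiners of representations induce measure preserving isomorphisms intertwining the Gaussian actions, and isomorphic representations yield conjugate actions. For a measure $\rho \in \Probc(\Gh)$, I would take $H_\R$ to be the real Hilbert space underlying $L^2(\Gh,\rho)$ (viewed as a real Hilbert space of twice the complex dimension, i.e.\ with the real inner product $\Re\langle\cdot,\cdot\rangle$) together with the orthogonal representation $g \mapsto \pi_\rho(g)$. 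Since $\rho$ is non-atomic and $\Gh$ is infinite (as $G$ is noncompact), $L^2(\Gh,\rho)$ is infinite dimensional, so the Gaussian space is a standard non-atomic probability space, which we may identify with the fixed $(X,\mu)$. This defines $\be_\rho \in \Hom(G,\Autpmp(X,\mu))$.

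For continuity of $\rho \mapsto \be_\rho$, the key point is that in the weak topology, $\rho \mapsto \widehat{\rho}$ is continuous pointwise on $G$, hence the matrix coefficients $\langle \pi_\rho(g)\mathbf{1},\mathbf{1}\rangle = \widehat{\rho}(g)$ vary continuously; more generally one checks that for a dense family of vectors (e.g.\ characters, or bounded continuous functions on $\Gh$) the matrix coefficients $g \mapsto \langle \pi_\rho(g)\xi,\eta\rangle$ depend continuously on $\rho$, uniformly on compact sets in $G$. Translating through the explicit formula for the Gaussian action on $L^2(X,\mu)$ -- which on the first Wiener chaos acts exactly by $\pi_\rho$ and on higher chaos by its tensor powers -- one gets that the matrix coefficients of $\be_\rho(g)$ on $L^2(X,\mu)$ depend continuously on $(\rho,g)$, locally uniformly in $g$. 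Since the topology on $\Autpmp(X,\mu)$ is that of pointwise strong convergence on $L^2$, this yields continuity of $\rho \mapsto \be_\rho$ as a map into $\Hom(G,\Autpmp(X,\mu))$ with its Polish (uniform-on-compacta) topology. (One must be a little careful to arrange the identification of the Gaussian space for varying $\rho$ with a single $(X,\mu)$ in a way compatible with continuity; this is standard -- e.g.\ realize everything on a fixed countably generated probability space via a measurable choice of orthonormal basis.)

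Property 1 is then immediate from functoriality: if $\rho \approx \eta$ then multiplication by $(d\rho/d\eta)^{1/2}$ gives a unitary $L^2(\Gh,\eta) \recht L^2(\Gh,\rho)$ intertwining $\pi_\eta$ and $\pi_\rho$, hence a real-orthogonal intertwiner of the underlying orthogonal representations, which the Gaussian functor sends to a $\theta \in \Autpmp(X,\mu)$ with $\be_\rho(g) = \theta\circ\be_\eta(g)\circ\theta^{-1}$. For property 2, recall that for a Gaussian action $\be$ associated to an orthogonal representation $\pi$, one has $\be(g_n) \recht \id$ strongly in $\Autpmp(X,\mu)$ if and only if $\pi(g_n) \recht \id$ strongly on $H_\R$ (the Koopman representation of the Gaussian action is the symmetric Fock space $S(H_\R\otimes\C)$, whose restriction to the first chaos is $\pi$; strong convergence on Fock space is equivalent to strong convergence on the first chaos since the latter generates). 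Hence the weakest topology on $G$ making $g \mapsto \be_\rho(g)$ continuous is the weakest one making $g \mapsto \pi_\rho(g)$ strongly continuous, which by definition is $\tau(\rho)$. The main obstacle I anticipate is purely bookkeeping: setting up the Gaussian functor with enough care that (a) all the Gaussian spaces are canonically identified with the \emph{single} fixed space $(X,\mu)$, and (b) the resulting map $\rho \mapsto \be_\rho$ is genuinely continuous (not merely Borel) into the Polish space $\Hom(G,\Autpmp(X,\mu))$ -- the representation-level continuity is easy, but transporting it through the Fock-space construction and the identification of probability spaces requires writing everything in terms of a fixed, countably generated model.
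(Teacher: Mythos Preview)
Your overall strategy---apply the Gaussian functor to the representation $\pi_\rho$, deduce property~1 from functoriality and property~2 from the fact that the Koopman representation restricted to the first chaos is $\pi_\rho$---is exactly the paper's approach, and your arguments for properties~1 and~2 are correct.

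The one place where the paper does something more specific than you is precisely the ``bookkeeping'' obstacle you flag at the end. Rather than building a separate Gaussian space for each $L^2(\Gh,\rho)$ and then trying to identify them all with $(X,\mu)$ in a continuous way, the paper first realizes \emph{all} the representations $\pi_\rho$ on a \emph{single fixed} Hilbert space $\cH = L^2(\Gh,\gamma)$ (with $\gamma$ a fixed probability measure equivalent to Haar). Concretely, it uses Lemma~\ref{lem.map-vphi} (the quantile transform) to produce, for each $\rho$, a Borel map $\psi_\rho : \Gh \to \Gh$ pushing $\gamma$ to $\rho$, and defines $\zeta_\rho(g)$ on $L^2(\Gh,\gamma)$ by multiplication by $\om \mapsto (\psi_\rho(\om))(g)$. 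Then $\zeta_\rho$ is unitarily conjugate to $\pi_\rho$, the map $\rho \mapsto \zeta_\rho$ is continuous because $\mu \mapsto \vphi_\mu$ is $L^1$-continuous (Lemma~\ref{lem.map-vphi}.3), and the Gaussian construction is performed \emph{once} for $\cH$, yielding a single homomorphism $\cU(\cH)\to\Autpmp(X,\mu)$ that is a homeomorphism onto its image. Composing gives $\be_\rho$, and continuity is immediate.

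So your suggestion of ``a measurable choice of orthonormal basis'' is replaced by something sharper: a continuous (in $L^1$) choice of measure-space isomorphism $(\Gh,\gamma)\to(\Gh,\rho)$. This is what makes $\rho\mapsto\be_\rho$ genuinely continuous rather than merely Borel, and it is the only substantive point your sketch leaves open.
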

\begin{proof}
Let $\cH$ be a fixed separable Hilbert space. Since $\cH$ can be viewed as a real Hilbert space, we can view $(X,\mu)$ as the Gaussian probability space associated with $\cH$, see e.g.\ \cite[Proof of Proposition 1.2]{AEG93}. We then obtain a continuous and injective homomorphism $\pi : \cU(\cH) \recht \Autpmp(X,\mu)$ with the property that for all $u_n,u \in \cU(\cH)$, we have that $u_n \recht u$ strongly if and only if $\pi(u_n) \recht \pi(u)$.

To prove the lemma, we choose a probability measure $\gamma$ on $\Gh$ such that $\gamma \approx \lambdah$, where $\lambdah$ is the Haar measure on $\Gh$. Put $\cH = L^2(\Gh,\gamma)$. Choose a bijective Borel map $\theta : \Gh \recht [0,1]$ such that $\theta_*(\gamma) = \lambda$, where $\lambda$ is the Lebesgue measure on $[0,1]$. Using the notation of Lemma \ref{lem.map-vphi}, define for every $\rho \in \Probc(\Gh)$, the Borel map
$$\psi_\rho : \Gh \recht \Gh : \psi_\rho = \theta^{-1} \circ \vphi_{\theta_*(\rho)} \circ \theta \; .$$
For every $\rho \in \Probc(\Gh)$, we define the unitary representation $\zeta_\rho$ of $G$ on $\cH$ given by
$$\zeta_\rho : G \recht \cU(L^2(\Gh,\gamma)) : (\zeta_\rho(g)\xi)(\om) = (\psi_\rho(\om))(g) \, \xi(\om) \; .$$
Using the unitary $V$ in Lemma \ref{lem.map-vphi}, it follows that the unitary representation $\zeta_\rho$ is unitarily equivalent to the unitary representation $\pi_\rho$ defined in \eqref{eq.rep-pi-mu}. In particular, if $\rho, \eta \in \Probc(\Gh)$ are mutually absolutely continuous, the unitary representations $\zeta_\rho$ and $\zeta_\eta$ are unitarily equivalent. It follows that the map
$$\be : \Probc(\Gh) \recht \Hom(G,\Autpmp(X,\mu)) : \beta_\rho(g) = \pi(\zeta_\rho(g))$$
has all the required properties.
\end{proof}

\begin{proof}[Proof of Theorem \ref{thm.non-class-actions}]
By \cite[Theorem 3.7]{Ue10}, any free product $(M,\tau) = (M_1,\tau_1)*(M_2,\tau_2)$ of tracial von Neumann algebras with $M_1$ diffuse and $M_2$ nontrivial is a full II$_1$ factor. Moreover, in the proof of \cite[Theorem 3.7]{Ue10}, the following is shown: if $N_1 \subset M_1$ is a diffuse von Neumann subalgebra and if $u_n \in M$ is a bounded sequence such that $\|u_n x - x u_n\|_2 \recht 0$ for all $x \in N_1 \cup M_2$, then $\|u_n - \tau(u_n)1\|_2 \recht 0$.

We are in exactly one of the following two cases.

{\bf Case 1.} $M_1$ has a direct summand of type I$_n$.

{\bf Case 2.} $M_1$ is of type II$_1$.

{\bf Proof in case 1.} Take a nonzero central projection $z \in \cZ(M_1)$ such that $M_1 z = A \ot M_n(\C)$ for some $n \geq 1$ and some diffuse abelian von Neumann algebra $A$. Write $A = L^\infty(X,\mu) \ovt B$ where $B$ is a diffuse abelian von Neumann algebra. Whenever $\be \in \Autpmp(X,\mu)$, we denote by $\Psi(\be) \in \Aut(M_1)$ the unique automorphism given by the identity on $N_1 = (B \ot M_n(\C)) \oplus M_1(1-z)$ and given by $\be$ on $L^\infty(X,\mu) \subset M_1 z$. Note that $\be \mapsto \Psi(\be)$ is a continuous homomorphism from $\Autpmp(X,\mu)$ to the Polish group of trace preserving automorphisms of $(M_1,\tau_1)$. Using the notation of Lemma \ref{lem.gaussian}, define
$$\Theta : \Probc(\Gh) \recht \Hom(G,\Aut(M)) : \rho \mapsto \Theta_\rho \quad\text{with}\;\; \Theta_\rho(g) = \Psi(\be_\rho(g)) * \id \; .$$
By Lemma \ref{lem.gaussian}, we get that the actions $\Theta_\rho$ and $\Theta_\eta$ are conjugate whenever the probability measures $\rho,\eta \in \Probc(\Gh)$ are mutually absolutely continuous.

We claim that the $\tau$-invariant $\tau(\Theta_\rho)$ equals $\tau(\rho)$. To prove the claim, assume that $\Theta_\rho(g_n) \recht \id$ in $\Out(M)$. We have to prove that $\Theta_\rho(g_n) \recht \id$ in $\Aut(M)$, because then $\be_\rho(g_n) \recht \id$ in $\Autpmp(X,\mu)$, so that $g_n \recht e$ in $\tau(\rho)$ by Lemma \ref{lem.gaussian}. Take $u_n \in \cU(M)$ such that $(\Ad u_n) \circ \Theta_\rho(g_n) \recht \id$ in $\Aut(M)$. Since the automorphisms $\Theta_\rho(g_n)$ act as the identity on $N = N_1 * M_2$, we get that $\|u_n x - x u_n\|_2 \recht 0$ for all $x \in N$. By \cite[Theorem 3.7]{Ue10}, as explained in the first paragraph of the proof, we get that $\Ad u_n \recht \id$ and the claim follows.

We can now complete the proof of the theorem in case 1. Assume that $f : \Hom(G,\Aut(M)) \recht Z$ is a Borel reduction of either conjugacy, cocycle conjugacy or outer conjugacy to the orbit equivalence relation $E$ of a continuous action of $S_\infty$ on a Polish space $Z$. Since $G$ is noncompact, the dual group $\Gh$ is nondiscrete and therefore, $\Gh$ has no isolated points. By \cite[Theorem 2]{KS99}, the equivalence relation $\approx$ of mutual absolute continuity on $\Prob(\Gh)$ is generically $S_\infty$-ergodic. By \cite[Proposition 4.1]{KS99}, $\Probc(\Gh) \subset \Prob(\Gh)$ is a dense $G_\delta$-set. So $\approx$ is also generically $S_\infty$-ergodic on $\Probc(\Gh)$.

Since $\Theta_\rho$ and $\Theta_\eta$ are conjugate when $\rho \approx \eta$, we get that $(f(\Theta_\rho), f(\Theta_\eta)) \in E$ whenever $\rho \approx \eta$. So, there exists a comeager set $\cU \subset \Probc(\Gh)$ such that $(f(\Theta_\rho) , f(\Theta_\eta)) \in E$ for all $\rho,\eta \in \cU$. By Theorem \ref{thm.non-classifiable-tau}, equality of $\tau$-topologies on $\Prob(\Gh)$ has meager equivalence classes. So, we find $\rho,\eta \in \cU$ with $\tau(\rho) \neq \tau(\eta)$. Since $(f(\Theta_\rho), f(\Theta_\eta)) \in E$ and $f$ is a Borel reduction, we also get that $\Theta_\rho$ and $\Theta_\eta$ are outer conjugate. This implies that $\tau(\Theta_\rho) = \tau(\Theta_\eta)$. So, $\tau(\rho) = \tau(\eta)$ and we have reached a contradiction.

{\bf Proof in case 2.} Fix a projection $q \in M_2$ with $0 < \tau(q) < 1$. By \cite[Proposition 3.2]{Dy92b}, we can take a large enough integer $n \geq 1$ such that the free product $(M_n(\C),\tau)*(\C q + \C (1-q),\tau)$ is of type II$_1$. A fortiori, $(M_n(\C),\tau)*(M_2,\tau)$ is of type II$_1$. Since $M_1$ is of type II$_1$, we can write $M_1 = N_1 \ot M_n(\C)$. Fix a minimal projection $p \in M_n(\C)$. Then, $M \cong pMp \ot M_n(\C)$. By \cite[Theorem 1.2]{Dy92b}, we have $pMp \cong N_1 * N_2$ where $N_2 = p(M_n(\C) * M_2)p$. Now we can use the same trick as in \cite[Theorem 6.2]{KLP14}. Since $N_1$ and $N_2$ are of type II$_1$, we can write $N_i = P_i \ot M_2(\C)$.
By \cite[Theorem 3.5(iii)]{Dy92a}, we can identify
$$N_1 * N_2 = (P_1 \ot M_2(\C)) * (P_2 \ot M_2(\C)) = (L(\F_3) * P_1 * P_2) \ot M_2(\C) \; .$$
Further identifying $L(\F_3) = L^\infty(X,\mu) * L(\F_2)$, we have found an isomorphism
$$\theta : M \recht (L^\infty(X,\mu) * L(\F_2) * P_1 * P_2) \ot M_{2n}(\C) \; .$$
Defining
\begin{multline*}
 \Theta : \Probc(\Gh) \recht \Hom(G,\Aut(M)) : \rho \mapsto \Theta_\rho \quad\text{with}\\  \Theta_\rho(g) = \theta^{-1} \circ \bigl((\be_\rho(g) * \id * \id * \id) \ot \id\bigr) \circ \theta \; ,
\end{multline*}
the same reasoning as above concludes the proof of the theorem.
\end{proof}

\begin{remark}
In the specific case of $M=L(\F_\infty)$, one can also consider the so-called free Bogoljubov actions arising from Voiculescu's free Gaussian functor (see \cite{Vo83}). When $G$ is a second countable, locally compact, noncompact, abelian group, the same arguments as in the proof of Theorem \ref{thm.non-class-actions} show that the equivalence relations of conjugacy, cocycle conjugacy and outer conjugacy on the free Bogoljubov actions are not classifiable by countable structures.
\end{remark}

\end{document}